\documentclass[a4paper]{amsart}

\usepackage{amsmath, amssymb, amsthm}
\usepackage{dsfont, cmll}
\usepackage[margin=30mm]{geometry}
\usepackage{xcolor}
\usepackage{setspace}
\usepackage{mathtools}
\usepackage{enumerate}
\usepackage[multiple]{footmisc}

\newtheorem{theorem}{Theorem}[section]
\newtheorem{proposition}[theorem]{Proposition}
\newtheorem{lemma}[theorem]{Lemma}
\newtheorem{corollary}[theorem]{Corollary}
\theoremstyle{definition}
\newtheorem{remark}[theorem]{Remark}
\newtheorem{definition}[theorem]{Definition}

\definecolor{maroon}{RGB}{196,2,51}
\definecolor{red}{RGB}{203,60,51}
\definecolor{green}{RGB}{56,152,38}
\definecolor{blue}{RGB}{64,99,216}

\let\originalmiddle=\middle
\def\middle#1{\mathrel{}\originalmiddle#1\mathrel{}}

\newcommand{\C}{\textnormal{\sf c}}
\newcommand{\D}{\textnormal{\sf d}}

\newcommand{\stirling}{\genfrac{[}{]}{0pt}{}}
\DeclareMathOperator{\Var}{Var}
\DeclareMathOperator{\E}{\mathbb{E}}
\DeclareMathOperator{\PP}{\mathbb{P}}
\DeclareMathOperator{\Gammaop}{\Gamma}
\DeclareMathOperator{\Betaop}{B}
\newcommand{\N}{\mathbb{N}_0}
\newcommand{\1}{\mathds{1}}

\numberwithin{equation}{section}

\vspace{3mm}

\title{The clumsy coupon collector's problem}

\author{Luke J.~Attrill}
\email{luke.attrill@monash.edu}

\author{Timothy M.~Garoni}
\email{tim.garoni@monash.edu}

\address{School of Mathematics, Monash University, Clayton, 3800, VIC, Australia}

\begin{document}

\begin{abstract}
We consider a generalisation of the classical coupon collector's problem, in which at each time step a collector either receives a new copy of
a randomly chosen coupon, or loses all their previously collected copies of that coupon.
We consider the amount of time it takes this
\emph{clumsy} coupon collector to obtain the full set of $m$ coupons. We establish limit theorems as $m\to\infty$ for the clumsy coupon
collection time, and describe the large $m$ asymptotics of its mean and variance. We identify three regimes, depending on how the probability
of a clumsy update, $p$, scales with $m$. If $p=o(1/m)$, we obtain a Gumbel limit theorem, as is the case for the classical coupon
collector. If $p=\omega(1/m)$, we instead show weak convergence to an exponential random variable. In the critical case, $p=c/m$, we give a
full characterisation of the limiting distribution in terms of a birth-death process.  
\end{abstract}

\maketitle




\onehalfspacing

\section{Introduction}
\label{introduction}
We study the clumsy coupon collector's problem, introduced in \cite{garoni2025matrix}.
Consider $m$ urns, labelled $1,2,\ldots,m$, each corresponding to a coupon type. 
The urns are initially empty. 
Each day, a collector receives an independent coupon, chosen uniformly at random from $[m]$.
With probability $1-p$, the collector adds the coupon to the corresponding urn, but with probability $p$ the collector clumsily knocks over
the urn and loses all coupons of that type. 
The first day, $T_{m,p}$, on which every urn is non-empty is called the \emph{clumsy coupon collection time}. See Figure~\ref{fig:example} for an illustration.
We can define this more precisely as follows.



\begin{definition}
  \label{def:coupling}
Let $\{ C_n \}_{n \in \mathbb{N}}$ be uniformly random elements of $[m]\coloneqq\{1,2,\ldots,m\}$, and let
$\{ U_n \}_{n \in \mathbb{N}}$ be uniformly random elements of $(0, 1)$.
Assume the family $\{ U_n, C_n \}_{n \in \mathbb{N}}$ is independent.
The random variable $C_n$ denotes the coupon type
obtained on day $n$.
Define the random variables
\[
  L(i, n) \coloneqq \sup\{ k \leqslant n \mid C_k = i \}, \quad i \in [m], \quad n > 0.
\]
Then $L(i, n)$ is the most recent day no later than $n$ that coupon
type $i$ was issued.
For clumsiness probability $p \in [0, 1)$, the 
clumsy coupon collection time can be defined by
\[
  T_{m, p} \coloneqq \inf \{ n > 0 \mid L(i, n) > 0 \ \text{and} \ U_{L(i, n)} \leq 1 - p \ 
    \text{for all $i \in [m]$} \},
\]
the first day that every coupon type has been updated, their most
recent updates all having been non-clumsy.
\end{definition}

\begin{figure}[t]
  \label{fig:example}
  \centering
\includegraphics{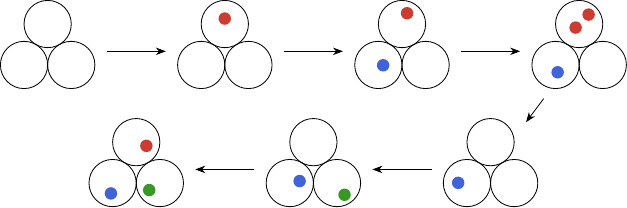}

  \caption{An outcome of $\{ T_{3, p} = 6 \}$ for
  some $p > 0$; red, green and blue denote coupon types
  $1$, $2$ and $3$ respectively. On day 4, the red urn is clumsily knocked over.}
\end{figure}

Consideration of the classical 
coupon collector's problem, $T_{m, 0}$, now spans three centuries.
The first progress dates back to 1711 by de Moivre,
who found $\PP(T_{m, 0} \leq n)$ explicitly\cite{demoivre1712demensura,
hald1984demoivre}. Apparently
the English politician Francis Robartes posed
the question to de Moivre, phrased as a
betting game between two players who roll an $m$-sided
die $n$ times \cite[Art.~236]{todhunter1865history}.

The modern study appears to have begun with~\cite{schelling1934coupon},
which describes the mean and variance of $T_{m,0}$. Independently, \cite{maunsell1938coupon, watson1939coupon}
showed $\E T_{m, 0} = m \, H_m$.
The law of $T_{m, 0}$ was later implicitly characterised
when \cite{godwin1949coupon} obtained all moments.
A clear treatment was given in
\cite[pp.~174--175]{feller1950probability}
and it seems that here the ``coupon collector'' name was cemented.
Erd\H{o}s and R\'enyi proved a Gumbel limit of $T_{m, 0}$
\cite{erdos1961coupon}, then
Billingsley and Baum found the asymptotic shapes
resulting from a partial collection
\cite{billingsley1965coupon}.
Two key generalisations of the classical problem
came from \cite{schelling1934coupon, schelling1954coupon},
where the coupon distribution is non-uniform,
and \cite{newman1960dixie}, where each coupon must
be collected twice.

Contemporary research has spawned a myriad of variants.
The paper \cite{wilf2006coupon} considers two simultaneous
collectors,
\cite{doumas2016dixie} answers the ``double dixie cup problem'' in
its most general form\footnote{This was first handled
in \cite{boneh1989revisited}.}
and \cite{jockovic2024reset} introduces a global reset coupon.
Recently a careless variant has been proposed~\cite{cruciani2026careless}, in which each urn is independently emptied each day, regardless of which coupon type is obtained. 
During the final stages of preparation of the current paper we were made aware of the preprint~\cite{long2026clumsy}, which also considers
the clumsy coupon collector problem. The results in~\cite{long2026clumsy} overlap our Theorems~\ref{thm:supercritical},~\ref{thm:first
  moment} and~\ref{thm:second moment},
in the case where $p$ is fixed, independent of $m$, and the methods of proof are entirely different.




\subsection*{Outline}
The current paper is organised as follows. 
Section \ref{language} gives a review of language-generating functions,
first used for probability in \cite{flajolet1992birthday}.
We found the method in Boneh and Hofri's excellent paper
\cite{boneh1989revisited}.
Section~\ref{first_collection} derives the probability-generating function of
$T_{m,p}$, from which we answer:
\begin{enumerate}[(i)]
  \item What is the expectation of $T_{m,p}$?
  \item What is the variance of $T_{m, p}$?
  \item What is the tail $\PP(T_{m, p} > n)$?
\end{enumerate}
Let the clumsiness probability $p = p_m$ be a function of $m$.
Then, we answer:
\begin{enumerate}[(i)]
  \setcounter{enumi}{3}
  \item What is the limiting shape of $T_{m,p}$ (under
    an appropriate rescaling) as $m \to \infty$?
  \item What are the asymptotics of $\E T_{m, p}$ and
    $\Var(T_{m, p})$?
\end{enumerate}
Both questions above depend on the sequence $p$ chosen. Section~\ref{asymptotics} answers (iv) and Section~\ref{practical} answers (v).

\section{Language-generating functions}
\label{language}
This section recalls results on generating functions from \cite{flajolet1992birthday}, which we make use of in Section~\ref{first_collection}.
Let $A$ be a finite set called \emph{the alphabet} whose elements
are called \emph{letters}.
Let $W$ consist of all \emph{words} (finite lists of letters)
with $\varepsilon$ denoting the empty word. Any subset
$L \subseteq W$ is called a \emph{language}.
The \emph{concatenation} of two languages $L_1$ and $L_2$ is
the collection of all concatenations $w_1.w_2$ where $w_1 \in L_1$
and $w_2 \in L_2$;
\[
  L_1 . L_2 \coloneqq \bigcup_{\substack{w_1 \in L_1\\ w_2 \in L_2}}
  \{w_1 .w_2\}.
\]
The \emph{shuffling} of two words is a set of words, defined recursively by:
\begin{align*}
  w \circ \varepsilon = \varepsilon \circ w &\coloneqq \{ w \},
  && w \in W,\\
  {\underbrace{(a_1.w_1)}_{\text{word}}} \circ {(a_2.w_2)}
  &\coloneqq {\underbrace{a_1.(w_1 \circ a_2.w_2)}_{\text{set of words}}}
          \cup {a_2.(a_1.w_1 \circ w_2)}, && a_1,a_2 \in A,\ w_1,w_2 \in W.
\end{align*}
For example, the shuffling of \textsf{be} and \textsf{dog} is
the set
\[
  \textsf{be} \circ \textsf{dog} = \{ \textsf{bedog},
  \textsf{bdeog}, \textsf{bdoeg}, \textsf{bdoge},
  \textsf{dbeog}, \textsf{dboeg}, \textsf{dboge},
  \textsf{dobeg}, \textsf{dobge}, \textsf{dogbe} \}.
\]
Notice that the letters in \textsf{be} and \textsf{dog} preserve their order.
The shuffling of two languages is
\[
  L_1 \circ L_2 \coloneqq \bigcup_{\substack{w_1 \in L_1\\ w_2 \in L_2}} w_1 \circ w_2,
  \qquad L_1, L_2 \subseteq W.
\]

Now assign each letter $a \in A$ a probability
$\pi(a) \in [0,1]$ so that $\sum_{a \in A} \pi(a) = 1$.
This can be extended to words by defining
$\pi(a_1 a_2 \ldots a_k) \coloneqq \prod_{i \in [k]} \pi(a_i)$
with $\pi(\varepsilon) = 1$, so that for any fixed $n$, the restriction of $\pi$ to words of length $n$ defines a probability mass function.

Define the \emph{ordinary generating function} of $L \subseteq W$ with respect to $\pi$ by
\[
  \phi_L(z) \coloneqq \sum_{w \in L} \pi(w) \, z^{\lvert w \rvert}
\]
and its \emph{exponential generating function} by
\[
  \hat\phi_L(z) \coloneqq
  \sum_{w \in L} \pi(w) \, \frac{z^{\lvert w \rvert}}{\lvert w \rvert!}.
\]
The coefficient $\left[z^n\right]\phi_L(z) = n!\left[z^n\right] \hat\phi_L(z)$
is the probability that a random word of size $n$ chosen via $\pi|_{A^n}$ belongs to $L$.
The objects $\phi_L(z)$ and $\hat\phi_L(z)$
can be viewed as formal power series in the variable $z$, belonging to the 
ring $\mathbb{R}[[z]]$. Expressions like
$\phi_H(z) = (1-p)^m \binom{m/z-1}{m}^{-1}$ (Lemma \ref{thm:H ogf})
then refer to a specific object in $\mathbb{R}[[z]]$,
rather than one of many complex- or real-valued functions
whose derivatives match $\phi_H(z)$.
The proposition below can be found in an analysis text such as
\cite[pp.~194--203]{kuttler2021analysis}. This guarantees
the arguments in Section~\ref{first_collection} hold
analytically on a ball of sufficiently small radius around the origin.
In algebraic terms, the convergent series in $\mathbb{R}[[z]]$
form an inverse-closed subring.
\begin{proposition}
  Let $f$ and $g$ be complex-valued functions that are analytic
  at zero with real coefficients. This means
  $f(x) = \sum_{n=0}^\infty a_n\,x^n$
  and $g(x) = \sum_{n=0}^\infty b_n\,x^n$
  holds for all $\lvert x \rvert$ small enough.
  Then
  \begin{enumerate}[(i)]
    \item $f + g$ is analytic at zero with real coefficients;
    \item $f\,g$ is analytic at zero with real coefficients; and
    \item if $g(0) \neq 0$, then $1/g$ is analytic at zero with real coefficients.
  \end{enumerate}
\end{proposition}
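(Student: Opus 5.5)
Since $f$ and $g$ are analytic at zero, their power series converge absolutely on some common disc $\lvert x\rvert<r$ with $r>0$: take $r$ to be the smaller of the two radii of convergence, both positive by assumption. All three parts are then proved by working with these absolutely convergent series on $\lvert x\rvert<r$, or on a smaller disc for part (iii). Throughout, $a_n,b_n\in\mathbb{R}$.

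For (i), add the series termwise: $f(x)+g(x)=\sum_n (a_n+b_n)x^n$ for $\lvert x\rvert<r$. The coefficients are real, so nothing more is needed.

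For (ii), use the Cauchy product. Absolute convergence of both series on $\lvert x\rvert<r$ lets us rearrange the double sum $\sum_{j,k}a_jb_kx^{j+k}$ freely (Mertens' theorem or Fubini for absolutely summable families). This gives
\[
  f(x)g(x)=\sum_{n\ge0}c_nx^n,\qquad c_n=\sum_{k=0}^n a_kb_{n-k}\in\mathbb{R},
\]
and the series converges absolutely because $\sum_n\sum_k\lvert a_k\rvert\lvert b_{n-k}\rvert\lvert x\rvert^n=\bigl(\sum\lvert a_k\rvert\lvert x\rvert^k\bigr)\bigl(\sum\lvert b_k\rvert\lvert x\rvert^k\bigr)<\infty$.

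For (iii), first normalise: replacing $g$ by $g/b_0$, which is harmless since $b_0=g(0)\neq0$ is real, we may assume $b_0=1$. Write $g=1-h$ with $h(x)=-\sum_{n\ge1}b_nx^n$, so $h(0)=0$.

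\emph{Step 1 (shrink the disc).} The function $H(\rho)=\sum_{n\ge1}\lvert b_n\rvert\rho^n$ is continuous on $[0,r)$ and satisfies $H(0)=0$. So we can choose $0<\rho<r$ with $H(\rho)<1$, and then $\lvert h(x)\rvert\le H(\rho)<1$ for all $\lvert x\rvert\le\rho$.

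\emph{Step 2 (geometric series).} On this disc, $1/g=\sum_{k\ge0}h^k$. By (ii) and induction, each $h^k$ is a power series with real coefficients, and it has no terms of degree below $k$.

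\emph{Step 3 (rearrange).} Expand every $h^k$ and collect the coefficient of $x^n$. Only $k\le n$ contribute to this coefficient, so it is a finite sum of reals and hence real. The rearrangement is justified because the doubly-indexed series converges absolutely: it is dominated by $\sum_k H(\lvert x\rvert)^k<\infty$.

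Alternatively, one could define the coefficients $d_n$ of $1/g$ by the recursion $d_0=1$, $d_n=\sum_{k=1}^n b_kd_{n-k}$, which makes them real by construction. One would then bound $\lvert d_n\rvert\le M^n$ by induction to get a positive radius of convergence, and use (ii) to confirm that $g\cdot\sum d_nx^n=1$.

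The main obstacle is (iii), specifically showing that the reciprocal's series has a positive radius of convergence. The step that settles this is Step 1, choosing $\rho$ so that $H(\rho)<1$, since that single bound supplies the absolute convergence used in Steps 2 and 3. Parts (i) and (ii) are routine.
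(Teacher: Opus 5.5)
The paper does not prove this proposition. It cites a standard analysis text (Kuttler, pp.~194--203) and uses the result only to justify moving between formal and analytic generating functions. Your argument is a correct, self-contained replacement for that citation.

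Parts (i) and (ii) are the usual termwise sum and Cauchy product. Your treatment of (iii) is the classical majorant argument: normalise $b_0=1$, shrink to a disc where $H(\rho)=\sum_{n\ge1}\lvert b_n\rvert\rho^n<1$, expand $\sum_k h^k$, and rearrange. The domination in Step 3 holds because the coefficients of $h^k$ are bounded in modulus by those of $\bigl(\sum_{n\ge1}\lvert b_n\rvert x^n\bigr)^k$. This follows from the Cauchy product formula by induction on $k$, and you should say so explicitly.

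A common alternative for (iii) is complex-analytic: $1/g$ is holomorphic near $0$ because $g(0)\neq0$, and the reality of its Taylor coefficients is then read off from $g\cdot(1/g)=1$. Your route avoids holomorphic function theory altogether. It uses only absolute convergence and rearrangement of absolutely summable double series, and the reality of each coefficient of $1/g$ is immediate, since it is a finite sum of real numbers.

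There are two small corrections.
\begin{enumerate}
\item The hypothesis only says $f(x)=\sum a_nx^n$ for $\lvert x\rvert$ small. So take $r$ to be a radius on which both representations actually hold, not the minimum of the radii of convergence. A function may differ from its series elsewhere inside the disc of convergence. Such an $r$ is automatically no larger than either radius of convergence, and nothing else in your argument changes.
\item The recursion in your alternative sketch has the wrong sign. With $b_0=1$, the identity $\sum_{k=0}^n b_kd_{n-k}=0$ for $n\ge1$ gives $d_n=-\sum_{k=1}^n b_kd_{n-k}$. Equivalently, $d_n=\sum_{k=1}^n h_kd_{n-k}$ in terms of the coefficients $h_k=-b_k$ of $h$. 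As written, $g=1+x$ would give $d_1=1$ instead of $-1$.
\end{enumerate}
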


The interchange between the formal and analytic interpretations
is needed later, to access the Laplace--Borel transform
(Theorem \ref{thm:LB transform}).
As a notational cue, $\phi_L(z)$ refers to the formal series
while $\phi_L$ refers to the analytic one, with $\phi_L(x)$ some evaluation of it.
Similarly for $\hat\phi_L(z)$, $\hat\phi_L$ and $\hat\phi_L(x)$.
Every ordinary generating function $\phi_L$ has radius of convergence
at least $1$, while every exponential generating function $\hat\phi_L$ has radius
of convergence $\infty$. Chapter 2 of \cite{wilf2005generating} provides
further discussion;
Propositions \ref{thm:concatenation}
and \ref{thm:shuffling} correspond to
Wilf's Rule $3$ and Rule $3^\prime$ respectively.

\begin{proposition}
\label{thm:concatenation}
Suppose words in $L_1.L_2$ have a \emph{unique factorisation}
in the sense that, whenever $w_1.w_2 = v_1.v_2$
for $w_1,v_1 \in L_1$ and $w_2,v_2 \in L_2$,
then $w_1 = v_1$ and $w_2 = v_2$. Then
\[
  \phi_{L_1.L_2}(z) = \phi_{L_1}(z)\, \phi_{L_2}(z).
\]
\end{proposition}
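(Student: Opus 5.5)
The plan is to expand both sides as sums over words and match terms. By definition,
\[
  \phi_{L_1.L_2}(z) = \sum_{w \in L_1.L_2} \pi(w)\, z^{\lvert w \rvert},
\]
where each word of $L_1.L_2$ is counted once, since $L_1.L_2$ is a set. The product of the two series expands, as an identity in $\mathbb{R}[[z]]$, to
\[
  \phi_{L_1}(z)\,\phi_{L_2}(z) = \sum_{w_1 \in L_1}\sum_{w_2 \in L_2} \pi(w_1)\pi(w_2)\, z^{\lvert w_1\rvert + \lvert w_2 \rvert}.
\]
This double sum is indexed by pairs rather than by words. So the proof reduces to showing two things: each summand equals the summand belonging to the word $w_1.w_2$, and the map $(w_1,w_2) \mapsto w_1.w_2$ is a bijection from $L_1 \times L_2$ onto $L_1.L_2$.

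The first claim is immediate from the definitions. Length is additive under concatenation, so $\lvert w_1.w_2\rvert = \lvert w_1\rvert + \lvert w_2\rvert$. Since $\pi$ is defined multiplicatively letter by letter (with $\pi(\varepsilon)=1$ covering empty factors), we also have $\pi(w_1.w_2) = \pi(w_1)\pi(w_2)$. For the second claim, surjectivity holds by the definition of $L_1.L_2$ as the union of all concatenations. Injectivity is precisely the unique factorisation hypothesis: if $w_1.w_2 = v_1.v_2$, then $(w_1,w_2) = (v_1,v_2)$. Reindexing the double sum through this bijection then gives exactly $\phi_{L_1.L_2}(z)$.

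The only point needing care is that the rearrangement is legitimate in $\mathbb{R}[[z]]$. I would argue coefficientwise. For each $n$, the coefficient of $z^n$ in the product is the finite sum over pairs with $\lvert w_1\rvert + \lvert w_2\rvert = n$, which is finite because the alphabet $A$ is finite. Hence each coefficient is a finite sum of nonnegative terms, and no convergence question arises. Without the unique factorisation hypothesis, the product would instead count words with multiplicity, so this is the one place where the hypothesis is used.
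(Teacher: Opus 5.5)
Your proof is correct: reindexing the coefficientwise Cauchy product through the bijection $(w_1,w_2)\mapsto w_1.w_2$ is the standard argument. Unique factorisation gives injectivity, and multiplicativity of $\pi$ together with additivity of length matches the summands. The paper does not prove this statement itself but identifies it with Wilf's Rule~3 for products of ordinary generating functions, and your argument is exactly what that rule amounts to for languages.
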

Any pair of languages that satisfy the assumption in
Proposition \ref{thm:concatenation} are said to be \emph{well-defined for concatenation}.
\begin{proposition}
\label{thm:shuffling}
If $L_1$ uses disjoint letters from $L_2$ then
\[
  \hat\phi_{L_1\circ L_2}(z) = \hat\phi_{L_1}(z) \, \hat\phi_{L_2}(z).
\]
\end{proposition}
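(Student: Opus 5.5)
The identity reduces to a statement about coefficients of $z^n/n!$, so I will prove it by comparing the coefficient of $z^n/n!$ on both sides. The right-hand side, by the Cauchy product of exponential series, has $z^n/n!$-coefficient
\[
  \sum_{k=0}^{n} \binom{n}{k} \Bigl(\sum_{\substack{w_1\in L_1\\ |w_1|=k}} \pi(w_1)\Bigr)\Bigl(\sum_{\substack{w_2\in L_2\\ |w_2|=n-k}} \pi(w_2)\Bigr).
\]
The left-hand side has coefficient $\sum_{w\in L_1\circ L_2,\,|w|=n}\pi(w)$. So it suffices to show that each word $w$ of length $n$ in $L_1\circ L_2$ arises from exactly one triple $(w_1,w_2,S)$. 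Here $w_1\in L_1$, $w_2\in L_2$, and $S\subseteq[n]$ is the set of positions occupied by the letters of $w_1$. Conversely, every such triple with $|S|=|w_1|$ should produce a word of $L_1\circ L_2$ with $\pi(w)=\pi(w_1)\pi(w_2)$.

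First I will show, by induction on $|w_1|+|w_2|$ using the recursive definition of $\circ$, that $w_1\circ w_2$ is exactly the set of words obtained by choosing positions $S\subseteq[|w_1|+|w_2|]$ with $|S|=|w_1|$, placing the letters of $w_1$ in order at $S$, and placing those of $w_2$ in order at the complement. The base case is when one word is $\varepsilon$. In the inductive step the first position either lies in $S$, giving the term $a_1.(w_1'\circ a_2.w_2')$, or it does not, giving $a_2.(a_1.w_1'\circ w_2')$. Since $\pi$ is multiplicative over letters, every such word has weight $\pi(w_1)\pi(w_2)$.

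The main point, and the only place the hypothesis enters, is injectivity. Because the letters used in $L_1$ are disjoint from those used in $L_2$, from $w$ we recover $S$ as the positions carrying letters of the first alphabet. Then $w_1$ is the subword of $w$ at $S$, and $w_2$ is the subword at the complement. Hence distinct triples give distinct words, and $L_1\circ L_2$ is a disjoint union over triples. There are $\binom{n}{k}$ choices of $S$ for each pair with $|w_1|=k$.

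Summing $\pi$ over words of length $n$ in $L_1\circ L_2$ therefore gives the convolution displayed above, which proves the identity in $\mathbb{R}[[z]]$. There is no real obstacle here, only careful bookkeeping of the bijection. Because sets are being unioned, any overcounting would silently break the identity, which is exactly why the disjointness hypothesis is needed.
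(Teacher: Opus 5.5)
Your proof is correct and complete. The induction shows that $w_1\circ w_2$ is the set of all interleavings, and the disjoint alphabets let you recover $S$ (and hence $w_1,w_2$) from $w$; together these give exactly the bijection the binomial convolution needs. The paper gives no proof of its own, deferring instead to Wilf's Rule $3^\prime$ (products of exponential generating functions correspond to binomial convolutions) and to Flajolet et al.; your argument is the standard proof behind that citation.
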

Any pair of languages that satisfy the assumption in
Proposition \ref{thm:shuffling} are said to be \emph{well-defined for shuffling}.
\begin{proposition}
  \label{thm:union}
If $L_1$ and $L_2$ are disjoint languages then
\[
  \phi_{L_1 \cup L_2}(z) = \phi_{L_1}(z) + \phi_{L_2}(z)
  \quad \text{and} \quad
  \hat\phi_{L_1 \cup L_2}(z) = \hat\phi_{L_1}(z) + \hat\phi_{L_2}(z).
\]
\end{proposition}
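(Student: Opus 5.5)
The plan is to argue directly from the definitions of $\phi_L$ and $\hat\phi_L$ as sums over the words of a language, using only that a sum over a disjoint union splits into the two sums. Since $L_1 \cap L_2 = \emptyset$, each word $w \in L_1 \cup L_2$ lies in exactly one of $L_1$, $L_2$. Hence, as formal power series,
\[
  \phi_{L_1\cup L_2}(z) = \sum_{w \in L_1 \cup L_2} \pi(w)\, z^{\lvert w\rvert}
  = \sum_{w \in L_1} \pi(w)\, z^{\lvert w\rvert} + \sum_{w \in L_2} \pi(w)\, z^{\lvert w\rvert}
  = \phi_{L_1}(z) + \phi_{L_2}(z).
\]
The same computation with the weight $z^{\lvert w\rvert}/\lvert w\rvert!$ in place of $z^{\lvert w\rvert}$ gives the statement for $\hat\phi$.

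To make the rearrangement rigorous in $\mathbb{R}[[z]]$, I will compare coefficients rather than manipulate infinite sums directly. For each $n$, the coefficient $[z^n]\phi_{L}(z)$ is the finite sum $\sum_{w \in L \cap A^n} \pi(w)$, which is finite because $A^n$ is finite. Disjointness gives the partition $(L_1 \cup L_2)\cap A^n = (L_1\cap A^n) \sqcup (L_2 \cap A^n)$. A finite sum over a disjoint union splits, so the $n$-th coefficients agree for every $n$. For the exponential generating functions, I will multiply each coefficient by $1/n!$, which preserves the identity.

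There is no real obstacle here. The only point needing care is that all manipulations are coefficientwise finite sums, so no convergence issues arise. If one wants the analytic interpretation as well, absolute convergence (all terms are nonnegative, and the radius of convergence is at least $1$) also justifies splitting the sums for $\lvert x\rvert$ small.
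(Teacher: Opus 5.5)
Your proof is correct: splitting the finite coefficient sum $[z^n]\phi_{L_1\cup L_2}(z)=\sum_{w\in(L_1\cup L_2)\cap A^n}\pi(w)$ over the partition $(L_1\cap A^n)\sqcup(L_2\cap A^n)$, and dividing by $n!$ for the exponential version, is exactly the routine verification. The paper gives no proof of its own and simply recalls this as a standard fact from the generating-function literature, so your argument is the one it implicitly relies on.
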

Similarly, we call two languages \emph{well-defined for addition}
when they meet the assumption of Proposition \ref{thm:union}.
The next theorem is sometimes called
``Borel's integration summation method.''
\begin{theorem}[Laplace--Borel transform]
  \label{thm:LB transform}
  The ordinary and exponential power series of
  $L \subseteq W$ relate (as analytic objects) by
  \[
    \phi_L(x) = \int_0^\infty \hat\phi_L(x\,t) \,e^{-t} \, dt,
    \quad 0 < x < 1.
  \]
\end{theorem}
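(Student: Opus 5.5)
We prove the identity coefficientwise, using the Gamma integral $\int_0^\infty t^n e^{-t}\,dt = n!$. Write
\[
  \hat\phi_L(z) = \sum_{n\ge 0} a_n \frac{z^n}{n!}
  \qquad \text{and} \qquad
  \phi_L(z) = \sum_{n\ge 0} a_n z^n,
\]
where
\[
  a_n \coloneqq \sum_{w\in L,\,|w|=n} \pi(w) \in [0,1].
\]
The bound $a_n\le 1$ holds because $\pi|_{A^n}$ is a probability mass function and $L\cap A^n\subseteq A^n$. Fix $0<x<1$. Then
\[
  \int_0^\infty \hat\phi_L(xt)e^{-t}\,dt
  = \int_0^\infty \sum_{n\ge0} a_n \frac{x^n t^n}{n!} e^{-t}\,dt .
\]
If we may exchange the sum and the integral, the $n$-th term becomes
\[
  a_n\frac{x^n}{n!}\int_0^\infty t^n e^{-t}\,dt = a_n x^n ,
\]
and summing over $n$ gives $\phi_L(x)$. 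This series converges, since $|a_nx^n|\le x^n$ and $x<1$.

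The only real step is justifying the interchange. Every term $a_n x^n t^n e^{-t}/n!$ is nonnegative, because $a_n\ge0$, $x>0$ and $t\ge0$. So Tonelli's theorem, or equivalently the monotone convergence theorem applied to the partial sums, permits the swap with no further hypotheses. The same computation shows the integral is finite, since it equals $\sum_n a_nx^n\le (1-x)^{-1}$.

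Two remarks identify the objects involved. First, $\hat\phi_L$ is entire: $|a_n z^n/n!|\le |z|^n/n!$, so the integrand $\hat\phi_L(xt)$ is a genuine continuous function on $[0,\infty)$, bounded by $e^{xt}$, and $e^{xt}e^{-t}$ is integrable for $x<1$. This gives an independent dominated-convergence justification. Second, the power series $\sum a_nx^n$ converges on $(0,1)$ and agrees there with the analytic function $\phi_L$ from the earlier discussion, so the identity holds between the analytic objects as claimed.
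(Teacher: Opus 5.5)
Your proof is correct. The paper does not actually prove this statement. It invokes it as the classical Borel summation identity and records only the bound $0 \leq \hat\phi_L(xt) \leq e^{xt}$, which guarantees that the integral exists for $0<x<1$.

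Your termwise computation supplies the argument the paper omits. You use $\int_0^\infty t^n e^{-t}\,dt = n!$ together with Tonelli's theorem, which applies because every $a_n \geq 0$. Your second remark, dominating by $e^{-(1-x)t}$, is exactly the paper's existence bound, here reused as an alternative dominated-convergence justification for swapping the sum and the integral.
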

Existence of the above integral follows
from the bound $0 \leq \hat\phi_L(x\,t) \leq e^{xt}$.
Since $\phi_L$ has radius of convergence at least $1$,
Theorem \ref{thm:LB transform}
fully specifies $\phi_L(z)$ via $\hat\phi_L(z)$.

\section{The first collection time $T_{m,p}$}
\label{first_collection}
We will characterise the clumsy coupon collection time
using the method of regular languages. This was introduced
in a probabilistic context in \cite{flajolet1992birthday}.
Let $m \geqslant 1$ denote the number of coupon types and 
let $p \in [0, 1)$ be a clumsiness probability.
Consider the alphabet $A = \{ \C_1, \ldots,
\C_m, \D_1, \ldots, \D_m \}$.
The letter $\C_i$ denotes a successful collection of coupon $i$
while $\D_i$ denotes the clumsy loss of type $i$ coupons.
For example, the realisation illustrated in Figure \ref{fig:example} corresponds to
a sequence $121{\otimes_1}31\dots$.

Assign probabilities to these letters by
$\pi(\C_i) \coloneqq (1-p)/m$ and $\pi(\D_i) \coloneqq p/m$.
Let $W$ be the set of finite words using letters from $A$
and extend $\pi$ to $W$ (as in Section~\ref{language}).

The following \emph{primitive languages} are defined:
\begin{align*}
  \C_i^{< k} &\coloneqq \{ \varepsilon, \C_i, \C_i^2, \ldots, \C_i^{k-1} \},
    &
    \C_i^{\geqslant k} &\coloneqq \{ \C_i^k, \C_i^{k+1}, \ldots\},\\
    \D_i^{< k} &\coloneqq \{ \varepsilon, \D_i, \D_i^2, \ldots, \D_i^{k-1} \},
    &
    \D_i^{\geqslant k} &\coloneqq \{ \D_i^k, \D_i^{k+1}, \ldots\}.
\end{align*}
We will write $\C_i$ as notation for the language
$\{ \C_i \}$ and it will be obvious in context.
Recall the \emph{partial exponentials}
$e_k(z) = \sum_{\ell=0}^k z^\ell/\ell!$ with $e_{-1}(z) = 0$.
The exponential generating functions of
the primitive languages are:
\begin{align*}
  \hat\phi_{\C_i^{< k}}(z) &= e_{k-1}\left(\frac{z\left(1-p\right)}{m}\right),
  &
  \hat\phi_{\C_i^{\geqslant k}}(z) &= \exp\left(\frac{z\left(1-p\right)}{m}\right)
    - e_{k-1}\left(\frac{z\left(1-p\right)}{m}\right),\\
  \hat\phi_{\D_i^{< k}}(z) &= e_{k-1}\left(\frac{z\,p}{m}\right),
  &
  \hat\phi_{\D_i^{\geqslant k}}(z) &= \exp\left(\frac{z\,p}{m}\right)
    - e_{k-1}\left(\frac{z\,p}{m}\right).
\end{align*}

The languages we use to decompose $T_{m, p}$ are:
\begin{align*}
  H &\coloneqq \left\{ w \in W \middle| \small
    \begin{array}{ll}
      \text{for all $i \in [m]$, $\C_i$ is present in $w$ and}\\
      \text{its final occurrence is not followed by a $\D_i$}
    \end{array} \right\},\\
  G &\coloneqq \left\{ w \in W \middle| \small
    \begin{array}{ll}
      \text{for all $i \in [m]$, if $\D_i$ is present in $w$ then}\\
      \text{its final occurrence is followed by a $\C_i$}
    \end{array} \right\} ,\\
  J &\coloneqq \{ w \in H \mid
    \text{whenever $w = u.v$ for $u \in H$ and
      $v \in W$ then $v = \varepsilon$} \}.
\end{align*}
It is $J$ which contains the words where a first collection time occurs at the end.
The words in $H$ exhibit a collection time at the end (with the first possibly sooner).
The words in $G$ recover a coupon if it is dropped.
Critically, the $n$-length elements of $J$ have total weight
$\PP(T_{m,p} = n)$. We will construct $H$ and $G$ using primitive languages
and recover a generating function of $J$ through
$\phi_J(z) = \phi_H(z) / \phi_G(z)$.

\begin{remark}
  \label{rem:lgf is pgf}
    The ordinary generating function $\phi_J(z)$ is
    equal to the probability-generating function $G_{T_{m,p}}(z)$ of the
    first collection time $T_{m,p}$ because
    $\phi_J(z) = \sum_{n \geqslant 0} \PP\left(T_{m,p} = n\right) z^n$.
\end{remark}

\begin{lemma}
\label{thm:H decomp}
The language $H$ equals the well-defined concatenation $J.G$.
  \begin{proof}
    To every word $w \in H$ there is a \emph{coupon-counting
    function} $f_w\colon \{ 0, 1, \ldots, \lvert w \rvert \} \to \N$
    that tracks the current number of distinct coupons held.
    This means $f_w(0) = 0$ and $f_w(\lvert w \rvert) = m$.
    Choose the smallest $k$ for which $f(k) = m$ and let
    $j$ be the word formed from the first $k$ letters in $w$.
    Then $w$ has the decomposition $j.g$ for some word $g$
    of length $\lvert w\rvert - k$.
    But since $f(k) = f(\lvert w \rvert) = m$,
    each occurrence of $\D_i$ in $g$ must be eventually followed
    by $\C_i$. Hence $g \in G$. There can be no decomposition $j = u.v$
    with $u \in H$ and $v \neq \varepsilon$ because $k > 0$ was selected minimally.
    This establishes $j \in J$ and $w \in J.G$. Thus $H \subseteq J.G$.

    To show $J.G \subseteq H$, take some $j.g \in J.G$ and
    consider the coupon-counting function $f_{j.g}$.
    Then $f_{j.g}(\lvert j \rvert) = m$ but $g$
    recovers all coupons it drops,
    so $f_{j.g}(\lvert j\rvert + \lvert g \rvert) = 
    f_{j.g}(\lvert j \rvert) = m$. We conclude
    $j.g \in H$ and $H = J.G$.

    We could not have selected a longer word $j' \in J$ in the first
    paragraph because $j' = j.u$ would hold with $u \neq \varepsilon$.
    This gives the contradiction $j' \notin J$.
    So, the factorisation of words $w \mapsto (j, g)$ is unique;
    $J$ and $G$ are well-defined for concatenation.
  \end{proof}
\end{lemma}
\begin{proposition}
\label{thm:H egf}
The language $H$ admits a well-defined decomposition into primitive languages:
\begin{align*}
  H = \Big[\Big(\C_1^{\geqslant 0} \circ \D_1^{\geqslant 0}\Big).\C_1\Big]
  \circ \cdots \circ
  \Big[
    \Big(\C_m^{\geqslant 0} \circ \D_m^{\geqslant 0}\Big).\C_m
  \Big].
\end{align*}
Its exponential generating function is hence
\[
  \hat\phi_H(z) = \left(1-p\right)^m \left(e^{z/m} - 1\right)^m.
\]
\begin{proof}
It holds for all $i$ that
\[
  \hat\phi_{\C_i^{\geqslant 0}}(z) = 
  e^{z(1-p)/m} \quad \text{and} \quad
  \hat\phi_{\D_i^{\geqslant 0}}(z) = 
  e^{zp/m} \quad \text{and} \quad 
  \phi_{\C_i}(z) = \frac{\left(1-p\right)z}{m}.
\]
Apply Proposition \ref{thm:shuffling} to find
\[
  \hat\phi_{{\C_i^{\geqslant 0}} \circ {\D_i^{\geqslant 0}}}(z)
  = \hat\phi_{\C_i^{\geqslant 0}}(z) \,
    \hat\phi_{\D_i^{\geqslant 0}}(z)
  = e^{z/m},
\]
then take the Laplace--Borel transform:
\[
  \phi_{{\C_i^{\geqslant 0}} \circ {\D_i^{\geqslant 0}}}(x)
  = \int_0^\infty e^{xt/m}\, e^{-t} \, dt
  = \int_0^\infty e^{-\left(1-x/m\right)t} \, dt
  = \frac{1}{1-x/m}, \quad 0 < x < 1.
\]
Therefore
\[
  \phi_{\left({\C_i^{\geqslant0}} \circ {\D_i^{\geqslant0}}\right).\C_i}(z)
    = \frac{\left(1-p\right)z}{m} \cdot \frac{1}{1-z/m}
  = \frac{\left(1-p\right)z}{m} \sum_{n=0}^\infty \left(\frac{z}{m}\right)^n
  = (1-p) \sum_{n=1}^\infty \left(\frac{z}{m}\right)^n,
\]
so by introducing the $n!$ terms,
\[
  \hat\phi_{\left(\C_i^{\geqslant0} \circ \D_i^{\geqslant0}\right).\C_i}(z)
  = (1-p)\sum_{n=1}^\infty \frac{(z/m)^n}{n!}
  = (1-p)\left(e^{z/m} - 1\right).
\]
We find $\hat\phi_H(z) = \prod_i 
\hat\phi_{\left(\C_i^{\geqslant0} \circ \D_i^{\geqslant0}\right).\C_i}(z)
= (1-p)^m\left(e^{z/m} - 1\right)^m$.
\end{proof}
\end{proposition}
\begin{proposition}
\label{thm:G egf}
The language $G$ admits a well-defined decomposition into primitive languages:
\begin{align*}
  G = \Big[\Big(\C_1^{\geqslant 0} \circ \D_1^{\geqslant 1}\Big).\C_1
    \cup \C_1^{\geqslant0} \Big]
  \circ \cdots \circ
  \Big[
    \Big(\C_m^{\geqslant 0} \circ \D_m^{\geqslant 1}\Big).\C_m
    \cup \C_m^{\geqslant0} \Big].
\end{align*}
Its exponential generating function is hence
\[
  \hat\phi_G(z) = \left[ (1-p) \left( e^{z/m} - 1 \right) + 1 \right]^m.
\]
\begin{proof}
  It holds for all $i$ that
  \begin{align*}
    \hat\phi_{{\C_i^{\geqslant0}} \circ {\D_i^{\geqslant1}}}(z)
    &= e^{z/m} - e^{(1-p)z/m}\\
    &= \sum_{n=0}^\infty
        \frac{(z/m)^n - ((1-p)z/m)^n}{n!},\\
    \phi_{{\C_i^{\geqslant0}} \circ {\D_i^{\geqslant1}}}(z)
    &= \sum_{n=0}^\infty 
        (z/m)^n - ((1-p)z/m)^n.
  \end{align*}
  Then we multiply by $\phi_{\C_i}(z) = \left(1-p\right)z\,m^{-1}$,
  \begin{align*}
    \phi_{\left({\C_i^{\geqslant0}} \circ {\D_i^{\geqslant1}}\right).\C_i}(z)
    &= \frac{\left(1-p\right)z}{m} \sum_{n=0}^\infty \left[
      \left(\frac{z}{m}\right)^n - \left(\frac{\left(1-p\right)z}{m}\right)^n
      \right]\\
    &= (1-p)\sum_{n=1}^\infty
        (z/m)^{n}
      - \sum_{n=1}^\infty
      ((1-p)\,z/m)^{n},\\
    \hat\phi_{\left({\C_i^{\geqslant0}} \circ {\D_i^{\geqslant1}}\right).\C_i}(z)
    &= (1-p)\sum_{n=1}^\infty
        \frac{(z/m)^{n}}{n!}
      - \sum_{n=1}^\infty
        \frac{((1-p)z/m)^{n}}{n!}\\
    &= (1-p)\left(e^{z/m} - 1\right)
    - e^{(1-p)z/m} + 1.
  \end{align*}
  Finally add $\hat\phi_{\C_i^{\geqslant 0}}(z)
  = e^{(1-p)z/m}$ to achieve
  \begin{align*}
    \hat\phi_{
      {\left({\C_i^{\geqslant0}} \circ {\D_i^{\geqslant1}}\right).\C_i}
      \cup {\C_i}
    }(z)
    &= (1-p) \left( e^{z/m} - 1 \right) + 1,\\
    \hat\phi_G(z)
    &= \left[(1-p) \left( e^{z/m} - 1 \right) + 1\right]^m. \qedhere
  \end{align*}
\end{proof}
\end{proposition}

The Laplace--Borel transform allows us to convert
$\hat\phi_G(z)$ and $\hat\phi_H(z)$ into ordinary generating functions.
Then Proposition~\ref{thm:concatenation} and Lemma \ref{thm:H decomp}
guarantee $\phi_H(z) = \phi_J(z)\, \phi_G(z)$.
Equipped with Remark~\ref{rem:lgf is pgf}, we will 
recover $G_{T_{m, p}}(x)$ at every $0 < x < 1$.
If our expression has an analytic extension,
then the expression characterises $G_{T_{m, p}}(z)$.


Recall the definition of the falling factorial:
\begin{gather*}
  z^{\underline{\ell}} = z\left(z-1\right)\cdots\left(z-\ell+1\right), \quad \ell \geqslant 0.
\end{gather*}
Naturally $z^{\underline 0} = 1$.
Then we define the $\ell^{\,\text{th}}$ binomial coefficient $\binom{z}{\ell}$ as
a polynomial of $z$,
\[
  \binom{z}{\ell} \coloneqq \frac{1}{\ell!} \, z^{\underline{\ell}},\quad
  \ell \geqslant 0.
\]
\begin{lemma}
  \label{thm:H ogf}
  The ordinary generating function of $H$ is
  \[
    \phi_H(z) = (1-p)^m \binom{m/z - 1}{m}^{-1}.
  \]
  \begin{proof}
    We use the Laplace--Borel transform with $m$ applications of integration
    by parts:
    \begin{align*}
      \frac{\phi_H(x)}{(1-p)^m} &= \int_0^\infty
        \left( e^{xt/m} - 1 \right)^m
        e^{-t} \, dt\\
      &= \underbrace{\left. \left( e^{xt/m} - 1 \right)^m \left(-e^{-t}\right)
          \right|^{t=\infty}_{t=0}}_0\\
      &\quad + \int_0^\infty m\left( e^{xt/m} - 1 \right)^{m-1} \left(\frac{x}{m} \, e^{xt/m}\right)
      e^{-t} \, dt\\
      &= \frac{m\left(x/m\right)}{1} \int_0^\infty \left(e^{xt/m}-1\right)^{m-1}
            e^{-(1-x/m)t} \, dt\\
      &= \frac{m \left(m-1\right) (x/m)^2}{1-x/m} \int_0^\infty \left( e^{xt/m}-1\right)^{m-2}
            e^{-(1-2x/m)t} \, dt\\
      &\ \,\vdots\\
      &= \frac{m!\left(x/m\right)}{\left(m/x - 1\right)^{\underline{m-1}}} 
            \int_0^\infty e^{-(1-x)t} \, dt\\
      &= \frac{m!}{\left(m/x-1\right)^{\underline{m}}} = \binom{m/x - 1}{m}^{-1},
      \quad 0 < x < 1.
    \end{align*}
    The right-hand side is a rational function of $x$, say
    $f(x) / g(x)$ where $f(z)$ and $g(z)$ are real polynomials in $z$.
    Then $f(z) \, g(z)^{-1}$ is a well-defined object in the ring 
    of formal power series $\mathbb{R}[[z]]$ and we can
    restate our result for $\phi_H(z)$;
    see Section~\ref{language}.
  \end{proof}
\end{lemma}
\begin{lemma}
  \label{thm:G ogf}
  The ordinary generating function of $G$ is
  \[
      \phi_G(z)
      = \sum_{\ell=0}^m \frac{m^{\underline{\ell}}
                \left(1-p\right)^\ell}{\left(m/z - 1\right)^{\underline{\ell}}}
      = \sum_{\ell=0}^m \binom{m}{\ell} \binom{m/z-1}{\ell}^{-1} \left(1-p\right)^\ell.
  \]
  \begin{proof}
    This proof mimics the previous one.
    For any $k \in \{ 1, 2, \ldots, m \}$ and $x \in (0, 1)$ it holds, via integration by parts,
    \begin{align*}
      &\quad\,\int_0^\infty \left[ (1-p)\left(e^{xt/m} - 1\right) + 1\right]^k e^{-(1-(m-k)x/m)t} \, dt\\
      &= \frac{k\left(1-p\right) x}{m \left(1 - \left(m-k\right) x/m \right)}
      \int_0^\infty \left[(1-p)\left(e^{xt/m} - 1\right) + 1\right]^{k-1} e^{-(1-(m-k+1) x/m)t} \, dt\\
      &\quad\, + \frac{1}{1-\left(m-k\right)x/m}.
    \end{align*}
    The Laplace--Borel transform combines with $m$ applications of this fact to give us
    \begin{align*}
      \phi_G(x) &= \int_0^\infty
        \left[ (1-p) \left( e^{xt/m} - 1 \right) + 1 \right]^m
        e^{-t} \, dt\\
        &= \sum_{\ell=0}^m \frac{m^{\underline{\ell}} \left(1-p\right)^\ell x^\ell}{m^\ell
        \prod_{k \leqslant \ell} (1-k\,x/m)}\\
        &= \sum_{\ell=0}^m \frac{m^{\underline{\ell}} \left(1-p\right)^\ell
          }{
            (m/x - 1)^{\underline{\ell}}}
        = \sum_{\ell = 0}^m \frac{\binom{m}{\ell}}{\binom{m/x-1}{\ell}} (1-p)^\ell,
        \quad 0 < x < 1. \qedhere
    \end{align*}
  \end{proof}
\end{lemma}
\begin{theorem}[Probability-generating function]
  \label{thm:pgf}
  The \textsc{pgf} of $T_{m,p}$ is
  \[
    G_{T_{m,p}}(z)
    \coloneqq \sum_{n=0}^\infty \PP \left(T_{m,p} = n\right) z^n
    = \left[
        \sum_{\ell=0}^m
        \binom{m-m/z}{\ell}
        (-1)^\ell
        \left(\frac{1}{1-p}\right)^\ell
      \right]^{-1}.
  \]
  As an analytic expression,
  both sides agree on an open ball around the origin
  with radius (strictly) greater than $1$,
  upon defining $G_{T_{m,p}}(0)$ by its limit $0$.
  \begin{proof}
    By Proposition \ref{thm:concatenation}, Remark \ref{rem:lgf is pgf},
    Lemma \ref{thm:H ogf} and Lemma \ref{thm:G ogf},
    \begin{align*}
      G_{T_{m,p}}(z) =
      \phi_J(z) = \frac{\phi_H(z)}{\phi_G(z)}
      &= \left(1-p\right)^m \binom{m/z - 1}{m}^{-1}
        \left[\sum_{\ell=0}^m \binom{m}{\ell} 
        \binom{m/z-1}{\ell}^{-1} \left(1-p\right)^\ell\right]^{-1} \\
      &= \left(1-p\right)^m
        \left[ \sum_{\ell=0}^m
        \binom{m/z - 1}{m}
        \binom{m}{\ell} 
        \binom{m/z-1}{\ell}^{-1} \left(1-p\right)^\ell\right]^{-1} \\
      &= (1-p)^m
        \left[
        \sum_{\ell=0}^m
        \frac{\left(m/z-1\right)^{\underline{m}} m^{\underline{\ell}}
        }{
          m! \left(m/z-1\right)^{\underline{\ell}}}
          \left(1-p\right)^\ell\right]^{-1}\\
      &= (1-p)^m
        \left[
        \sum_{\ell=0}^m
        \binom{m/z - \ell - 1}{m - \ell}
        \left(1-p\right)^\ell\right]^{-1}.
    \end{align*}
    Reverse the summation order via $\ell \mapsto m - \ell$
    then use $\binom{-\nu}{\ell} = (-1)^\ell \binom{\nu + \ell - 1}{\ell}$.
    This concludes the formal result.

    Let $E_k$ be the event that coupons
    $1, 2, \dots, m$ are collected on
    the days $k\,m + 1, k\,m+ 2, \dots, k\,m + m$ respectively.
    Then $\{ E_k \}_{k \geqslant 0}$ is a collection of
    independent events, each with positive probability $q = ((1-p)/m)^m$.
    The random variable $G = \inf_{k \geqslant 0} \1_{E_k}$
    is geometric with parameter $q$ and thus
    \[
      \PP(T_{m, p} = n) \leq \PP(T_{m, p} \geq n)
      \leq \PP(m\,G + m \geq n)
      = (1-q)^{\left\lceil\frac{n-m}{m}\right\rceil}.
    \]
    The radius of convergence is no less than $(1-q)^{-1/m} > 1$.
  \end{proof}
\end{theorem}
\begin{corollary}[Characteristic function]
  The characteristic function is
  \[
    \varphi_{T_{m,p}}(t) \coloneqq
    \E\left[ \exp(t\,T_{m,p}\,i) \right]
    = \left[
        \sum_{\ell=0}^m
        \binom{m-m\,e^{-ti}}{\ell}
        (-1)^\ell
        \left(\frac{1}{1-p}\right)^\ell
      \right]^{-1}, \quad t \in \mathbb{R}.
  \]
\end{corollary}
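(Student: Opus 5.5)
The plan is to substitute $z = e^{ti}$ into Theorem~\ref{thm:pgf}, since $\varphi_{T_{m,p}}(t) = \E[(e^{ti})^{T_{m,p}}] = G_{T_{m,p}}(e^{ti})$. The formula of the corollary is literally the right-hand side of Theorem~\ref{thm:pgf} evaluated at $z = e^{ti}$, because $m/z = m\,e^{-ti}$. The work is to justify that this evaluation is legitimate, i.e.\ that the identity in Theorem~\ref{thm:pgf} holds analytically on the unit circle and not just formally.

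First, I use the tail bound from the proof of Theorem~\ref{thm:pgf}. It gives $\PP(T_{m,p} = n) \leq (1-q)^{\lceil (n-m)/m\rceil}$ with $q = ((1-p)/m)^m > 0$. Hence the power series $\sum_n \PP(T_{m,p}=n) z^n$ converges absolutely on the open disc of radius $(1-q)^{-1/m} > 1$, and in particular at $z = e^{ti}$ with the sum equal to $\E[e^{ti T_{m,p}}]$.

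Second, I identify this series with the closed form. Theorem~\ref{thm:pgf} asserts agreement of both sides on an open ball of radius strictly greater than $1$, with the value at $0$ defined by its limit. I check that this is consistent: the bracketed expression $B(z) = \sum_{\ell=0}^m \binom{m-m/z}{\ell}(-1)^\ell (1-p)^{-\ell}$ is a rational function of $z$. On the disc where the power series converges, $G_{T_{m,p}}$ is analytic and equals $1/B$ near $0^+$ (on $(0,1)$ by the Laplace--Borel argument). Since $G_{T_{m,p}}$ is analytic, not identically zero (it is positive on $(0,1)$), and $B$ is meromorphic, the identity $G_{T_{m,p}}\cdot B = 1$ extends from $(0,1)$ to the whole disc by the identity theorem. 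In particular, $B$ has no zero on the disc, apart from the removable point $z=0$ where $B$ has a pole. Thus $G_{T_{m,p}}(z) = B(z)^{-1}$ at every $z$ with $|z|=1$.

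Finally, setting $z = e^{ti}$ and noting $e^{ti} \neq 0$ gives the stated formula for all real $t$. The only real obstacle is the analytic continuation step across $|z|=1$. It is handled by the radius-of-convergence bound $(1-q)^{-1/m} > 1$ already established, together with the identity theorem for meromorphic functions on a connected domain.
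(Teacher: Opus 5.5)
Your proposal is correct and follows the paper's route: the corollary is Theorem~\ref{thm:pgf} evaluated at $z=e^{ti}$, which is legitimate because the geometric tail bound in that theorem's proof gives radius of convergence at least $(1-q)^{-1/m}>1$. Your identity-theorem argument, extending $G_{T_{m,p}}\cdot B=1$ from $(0,1)$ to the punctured disc, just makes explicit the analytic-continuation step that the paper asserts in the statement of Theorem~\ref{thm:pgf}.
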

\begin{corollary}[Moment-generating function]
\label{thm:mgf}
  The \textsc{mgf} of $T_{m,p}$ is
  \[
    M_{T_{m,p}}(t) \coloneqq
    \E\left[\exp(t\,T_{m,p})\right]
    = \left[
        \sum_{\ell=0}^m
        \binom{m-m\,e^{-t}}{\ell}
        (-1)^\ell
        \left(\frac{1}{1-p}\right)^\ell
      \right]^{-1}, \quad t \in \left(-\infty, \delta_{m,p}\right),
  \]
  for some positive $\delta_{m,p} > 0$.
\end{corollary}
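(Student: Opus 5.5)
The plan is to substitute $z = e^{t}$ into Theorem~\ref{thm:pgf}, since $M_{T_{m,p}}(t) = \E[(e^t)^{T_{m,p}}] = G_{T_{m,p}}(e^t)$ whenever the power series defining $G_{T_{m,p}}$ converges at $z = e^t$. The substitution turns $m - m/z$ into $m - m e^{-t}$, which is exactly the claimed expression. What has to be justified is that both the series and the closed form are valid on a half-line $(-\infty, \delta_{m,p})$.

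First, the series side. Theorem~\ref{thm:pgf} gives $\PP(T_{m,p} = n) \leq (1-q)^{\lceil (n-m)/m \rceil}$ with $q = ((1-p)/m)^m$. Hence $\sum_n \PP(T_{m,p}=n) z^n$ converges absolutely for $\lvert z \rvert < R \coloneqq (1-q)^{-1/m}$, and $R > 1$. Set $\delta_{m,p} \coloneqq \log R > 0$. For $t < \delta_{m,p}$ we have $0 < e^t < R$. So $M_{T_{m,p}}(t) = \sum_n \PP(T_{m,p}=n)\, e^{tn}$ is finite and equals the analytic function $G_{T_{m,p}}$ evaluated at $e^t$. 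For $t \to -\infty$ nothing goes wrong, since $e^t \in (0,1)$ lies inside the disc of convergence.

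Second, the closed form. Theorem~\ref{thm:pgf} states that the power series and the bracketed rational expression agree on an open ball around the origin of radius strictly greater than $1$. I would take $\delta_{m,p}$ to be the logarithm of the smaller of that radius and $R$, which is still positive. Then every $e^t$ with $t < \delta_{m,p}$ lies in the ball. Substituting $z = e^t$ and $m/z = m e^{-t}$ into the identity gives the formula.

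The main obstacle is that the bracketed polynomial in $m/z$ could vanish at some $z = e^t$, which would make the reciprocal undefined. On the ball this is excluded by the theorem's assertion of agreement, because the left side is finite there. Equivalently, the identity $\phi_J \phi_G = \phi_H$ holds analytically, and $\phi_H(e^t) \neq 0$ for real $t$ in range, so the bracket cannot vanish on the real segment. It is therefore enough to invoke the agreement statement of Theorem~\ref{thm:pgf} and its radius bound, shrinking $\delta_{m,p}$ if necessary.
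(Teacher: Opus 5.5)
Your proposal is correct and is essentially the paper's argument: the corollary follows from substituting $z=e^{t}$ into Theorem~\ref{thm:pgf}, with the radius-of-convergence bound $(1-q)^{-1/m}>1$ proved there giving $\delta_{m,p}=-\tfrac{1}{m}\log(1-q)$. Your main argument that the bracket cannot vanish (the convergent power series is finite on the ball where it agrees with the closed form) is sufficient. The ``equivalently'' remark via $\phi_J\phi_G=\phi_H$ is unnecessary and slightly loose, because $\phi_H$ and $\phi_G$ both have a pole at $z=1$, so that identity only makes sense meromorphically once $e^{t}\geq 1$.
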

We choose the \textsc{mgf} for further analysis
and present an explicit form in Lemma \ref{thm:factored mgf}.
To find the generating function of the partial sums
$\PP(T_{m, p} \leq n) = \sum_{k \leqslant n} \PP(T_{m,p} = k)$
we appeal to Rule 5 of \cite{wilf2005generating}.
This also reveals a generating function for the tails $\PP(T_{m,p} > n)$.
\begin{corollary}
  \label{thm:tail}
As a formal power series, the generating function of
the distribution is
\[
  \sum_{n=0}^\infty \PP\left(T_{m,p} \leq n\right) z^n =
    \left[(1-z)
      \sum_{\ell=0}^m
      \binom{m-m/z}{\ell}
      (-1)^\ell
      \left(\frac{1}{1-p}\right)^\ell
    \right]^{-1}
\]
while the generating function of the tails is
\[
  \sum_{n=0}^\infty \PP\left(T_{m,p} > n\right) z^n = 
  \frac{1}{1-z} - \left[(1-z)
      \sum_{\ell=0}^m
      \binom{m-m/z}{\ell}
      (-1)^\ell
      \left(\frac{1}{1-p}\right)^\ell
    \right]^{-1}.
\]
\end{corollary}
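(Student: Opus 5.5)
The plan is to derive both identities directly from Theorem~\ref{thm:pgf} using the standard partial-sum rule for ordinary generating functions (Rule 5 of \cite{wilf2005generating}). Write $a_n \coloneqq \PP(T_{m,p}=n)$, so that $G_{T_{m,p}}(z)=\sum_{n\geqslant0} a_n z^n$ as an element of $\mathbb{R}[[z]]$. The partial sums $s_n=\sum_{k\leqslant n} a_k=\PP(T_{m,p}\leq n)$ then satisfy the formal identity
\[
  \sum_{n\geqslant 0} s_n z^n = \frac{1}{1-z}\sum_{n\geqslant0} a_n z^n .
\]
This follows from the Cauchy product with $1/(1-z)=\sum_{n\geqslant0} z^n$, which is invertible in $\mathbb{R}[[z]]$ since $1-z$ has nonzero constant term.

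Substituting the expression for $G_{T_{m,p}}(z)$ from Theorem~\ref{thm:pgf} gives the first display immediately. The bracketed sum $S(z)\coloneqq\sum_{\ell=0}^m \binom{m-m/z}{\ell}(-1)^\ell(1-p)^{-\ell}$ contains negative powers of $z$, so it is not itself a formal power series. To handle this, I will interpret $[(1-z)S(z)]^{-1}$ as $(1-z)^{-1}\cdot S(z)^{-1}$. The factor $S(z)^{-1}$ is exactly the formal series that Theorem~\ref{thm:pgf} identifies with $G_{T_{m,p}}(z)$. Concretely, from the proof of Theorem~\ref{thm:pgf}, $S(z)^{-1}=\phi_H(z)/\phi_G(z)$, and this is a legitimate element of $\mathbb{R}[[z]]$. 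Alternatively, one can multiply numerator and denominator by $z^m$ to get a ratio of polynomials whose denominator has nonzero constant term. Either way, the product of $(1-z)^{-1}$ with it is well defined in $\mathbb{R}[[z]]$.

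For the tails, use $\PP(T_{m,p}>n)=1-s_n$ and $\sum_{n\geqslant0} z^n = 1/(1-z)$ formally. Subtracting the series termwise gives the second identity. Since $T_{m,p}<\infty$ almost surely (it is bounded by the geometric argument in the proof of Theorem~\ref{thm:pgf}), the coefficients $s_n$ are genuine probabilities, though this is not even needed for the formal identity.

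I expect the only real obstacle to be the bookkeeping just described: justifying that the displayed expression, which involves $m/z$, denotes a formal power series at all. I will settle this by appealing to the factorisation $\phi_H/\phi_G$ and to the remark in Section~\ref{language} that rational functions with nonzero constant-term denominators embed in $\mathbb{R}[[z]]$. The remaining steps are one-line manipulations.
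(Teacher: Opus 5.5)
Your proposal is correct and follows the paper's route: the paper applies Rule~5 of \cite{wilf2005generating}, multiplying the \textsc{pgf} of Theorem~\ref{thm:pgf} by $(1-z)^{-1}$ to get the partial sums, and subtracts the result from $\sum_{n} z^n=(1-z)^{-1}$ to get the tails. Your extra check is sound and is something the paper leaves implicit: $z^m S(z)$ is a polynomial with nonzero constant term $m^m/(m!\,(1-p)^m)$, equivalently $S(z)^{-1}=\phi_H(z)/\phi_G(z)$ with $\phi_G(0)=1$, so the displayed expressions really are elements of $\mathbb{R}[[z]]$.
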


The (unsigned) Stirling numbers of the first kind are defined
on $\N \times \N$ by the expansion of the rising factorial:
\[
  z^{\overline{\ell}}
  = z \left(z + 1\right) \cdots \left(z + \ell - 1\right)
  \eqqcolon \sum_{k=0}^\infty \stirling{\ell}{k} z^k, \quad \ell \geqslant 0.
\]
The above sum is always finite and $\stirling{0}{0} = 1$.
By applying
$\binom{z}{\ell} = z^{\underline{\ell}} / \ell! = \left(-1\right)^\ell
z^{\overline{\ell}} / \ell!$ we can remove the $(-1)^\ell$ factors
from the \textsc{mgf}.
Then by expanding into Stirling numbers,
\begin{align*}
  \Lambda(t) &\coloneqq \sum_{\ell=0}^m (-1)^{\ell} \binom{m-m\,e^{-t}}{\ell} (1-p)^{-\ell}\\
  &= \sum_{\ell=0}^m \frac{1}{\ell!\left(1-p\right)^\ell} \sum_{k=0}^\ell \stirling{\ell}{k}
        m^k \left(e^{-t} - 1\right)^k.
\end{align*}
The function $\Lambda$ is smooth and positive on $(-\infty, \delta_{m,p})$
so we can calculate the derivatives of $M_{T_{m,p}} = 1/\Lambda$
by inspecting $\Lambda$. Clearly $\Lambda(0) = 1$ and
\begin{align*}
  \Lambda'(t) &= \sum_{\ell=1}^m \frac{1}{\ell!(1-p)^\ell}
  \sum_{k=1}^\ell \stirling{\ell}{k}
  k\,m^k \left(e^{-t} - 1\right)^{k-1} \left(-e^{-t}\right), \\
  \Lambda'(0) &= (-m) \sum_{\ell=1}^m \frac{1}{\ell (1-p)^\ell}.
\end{align*}
This needed the fact $\stirling{\ell}{1} = (\ell-1)!$ for $\ell \geqslant 1$.
\begin{theorem}[Mean]
  \label{thm:expectation}
  The expectation of $T_{m,p}$ is
  \[
    \E T_{m,p} = m \sum_{\ell = 1}^m \frac{1}{\ell\left(1-p\right)^\ell}.
  \]
  This quantity increases in both $m$ and $p$. Moreover,
  each function $p \mapsto \E T_{m,p}$ is convex.
  \begin{proof}
    Simply take $\E T_{m,p} = M_{T_{m,p}}'(0) = -\Lambda'(0) \, \Lambda(0)^{-2}$.
    The two claims are trivial.
  \end{proof}
\end{theorem}
For calculation and asymptotics of the mean, see Theorem~\ref{thm:first moment}.

It remains to find the variance of $T_{m,p}$.
The second derivative of $\Lambda$ is
\[
  \Lambda''(t) 
  = \sum_{\ell=1}^m \frac{1}{\ell!\left(1-p\right)^\ell} \sum_{k=1}^\ell 
  \stirling{\ell}{k} k\,m^k
  \left[\left(k-1\right)\left(e^{-t} - 1\right)^{k-2} \left(-e^{-t}\right)^2
  + \left(e^{-t} - 1\right)^{k-1} e^{-t}\right],
\]
where we have used the shorthand $\left(x^0\right)' = 0 \, x^{-1}$. 
By ignoring the zero terms,
\begin{align*}
  \Lambda''(0)
  &= \sum_{\ell=1}^m \frac{1}{\ell!\left(1-p\right)^\ell}
  \left( \stirling{\ell}{1} m + 2 \stirling{\ell}{2} m^2 \right)\\
  &= m \sum_{\ell=1}^m \frac{1}{\ell\left(1-p\right)^\ell}
    + 2\,m^2 \sum_{\ell=2}^m \frac{H_{\ell-1}}{\ell\left(1-p\right)^\ell}.
\end{align*}
As usual $H_{\ell-1}$ is the $(\ell-1)^\text{th}$ harmonic number;
we have used $\stirling{\ell}{2} = (\ell-1)! \, H_{\ell-1}$.
The second derivative of $M_{T_{m,p}}$ can be calculated through
\begin{equation}
  \label{eq:mgf second deriv}
  M_{T_{m,p}}'' = \left( \frac{1}{\Lambda} \right)''
  = \frac{2\left(\Lambda'\right)^2\Lambda
      - \Lambda''\Lambda^2}{\Lambda^4}.
\end{equation}
With $\Lambda(0) = 1$ it follows
\begin{align}
  \Var(T_{m,p}) &= \E \left[T_{m,p}^2\right] 
  - \left(\E T_{m,p}\right)^2
  = M_{T_{m,p}}''(0) - M_{T_{m,p}}'(0)^2 \notag\\
  &= \frac{2\,\Lambda'(0)^2\,\Lambda(0) - \Lambda''(0)\, \Lambda(0)^2}{\Lambda(0)^4}
    - \left( - \frac{\Lambda'(0)}{\Lambda(0)^2} \right)^2 \notag\\
  &= \Lambda'(0)^2 - \Lambda''(0) \notag \\
  \label{eq:variance}
  &= \left( m \sum_{\ell=1}^m \frac{1}{\ell\left(1-p\right)^\ell}\right)^2
    - m \sum_{\ell=1}^m \frac{1}{\ell\left(1-p\right)^\ell}
    - 2 \, m^2 \sum_{\ell=2}^m \frac{H_{\ell-1}}{\ell\left(1-p\right)^\ell}.
  \intertext{This can be rearranged into the sum of two positive terms:}
  \Var(T_{m,p})
  &= m^2 \sum_{i,j=1}^m \frac{1}{ij\left(1-p\right)^{i+j}}
  - m \sum_{\ell=1}^m \frac{1}{\ell\left(1-p\right)^\ell}
  - 2\, m^2 \sum_{1 \leqslant i < j \leqslant m} \frac{1}{ij\left(1-p\right)^j} \notag\\
  &= m^2 \sum_{i,j=1}^m \frac{1}{ij} \left[ \frac{1}{\left(1-p\right)^{i+j}}
      - \frac{1}{(1-p)^j} \right]
  + \sum_{\ell=1}^m \frac{m}{\ell} \left(\frac{m}{\ell}-1\right) \frac{1}{\left(1-p\right)^\ell}.
  \notag
\end{align}

\begin{theorem}[Variance]
  \label{thm:variance}
  The variance of $T_{m,p}$ is
  \[
    \Var(T_{m,p}) = 
      m^2 \sum_{i,j=1}^m \frac{1}{ij} \left[ \frac{1}{\left(1-p\right)^{i+j}}
      - \frac{1}{(1-p)^j} \right]
      + \sum_{\ell=1}^m \frac{m}{\ell} \left(\frac{m}{\ell}-1\right)
      \frac{1}{\left(1-p\right)^\ell}.
  \]
  This quantity increases in both $m$ and $p$. Moreover,
  the first sum vanishes for $p = 0$. For fixed $m$,
  the function $p \mapsto \Var(T_{m,p})$ is convex.
\end{theorem}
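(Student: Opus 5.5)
The variance formula itself is already established by the display \eqref{eq:variance} and the rearrangement that follows it. So the proof will consist of checking that rearrangement and then the three qualitative claims.

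For the rearrangement, I expand $\bigl(m\sum_\ell \ell^{-1}(1-p)^{-\ell}\bigr)^2$ as the double sum $m^2\sum_{i,j}\frac{1}{ij}(1-p)^{-(i+j)}$, and write $H_{j-1}/j=\sum_{i<j}\frac{1}{ij}$. This turns $2m^2\sum_{j}H_{j-1}/(j(1-p)^j)$ into $2m^2\sum_{i<j}\frac{1}{ij}(1-p)^{-j}$. Next I symmetrise: $2\sum_{i<j}\frac{1}{ij}(1-p)^{-j}$ equals $\sum_{i\neq j}\frac{1}{ij}(1-p)^{-\max(i,j)}$. Here I must be careful, because the displayed form instead uses $\sum_{i,j}\frac{1}{ij}(1-p)^{-j}$, which is not symmetric in this sense. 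The right way is to write the full square $\sum_{i,j}\frac{1}{ij}(1-p)^{-j}$ and subtract the $i\ge j$ part appropriately. Concretely, I compare $\sum_{i,j}\frac{1}{ij}(1-p)^{-j}$ with $2\sum_{i<j}\frac{1}{ij}(1-p)^{-j}+\sum_{\ell}\ell^{-2}(1-p)^{-\ell}$ plus a correction. I expect this bookkeeping to be the only delicate step, and I would verify it at $p=0$ and $m=1,2$ as a sanity check. The diagonal terms $\ell^{-2}(1-p)^{-\ell}$, combined with $-m\sum\ell^{-1}(1-p)^{-\ell}$, produce the last sum $\sum_\ell\frac{m}{\ell}(\frac{m}{\ell}-1)(1-p)^{-\ell}$. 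If the naive matching fails, I would fall back to proving the stated formula directly: expand both sides as polynomials in $u=(1-p)^{-1}$ and compare coefficients of $u^k$. On each side this coefficient is an explicit finite sum of $1/(ij)$ over $i+j=k$ or $j=k$.

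Once the formula holds, the qualitative claims are easy.
\begin{itemize}
\item \emph{Vanishing at $p=0$.} Each bracket $(1-p)^{-(i+j)}-(1-p)^{-j}$ is zero when $u=1$.
\item \emph{Positivity of both terms.} Each bracket is nonnegative since $u\ge1$. Each coefficient $\frac{m}{\ell}(\frac{m}{\ell}-1)$ is nonnegative for $\ell\le m$.
\item \emph{Monotonicity and convexity in $p$.} Write $u=(1-p)^{-1}$. The first term is $m^2\sum_{i,j}\frac{1}{ij}u^j(u^i-1)$, a polynomial in $u$ with all coefficients nonnegative after expansion, since $u^j(u^i-1)$ is increasing and convex for $u\ge1$. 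The second term is a nonnegative combination of powers $u^\ell$. As $p\mapsto u$ is increasing and convex, and a composition of increasing convex functions is increasing and convex, the variance is increasing and convex in $p$.
\end{itemize}

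\emph{Monotonicity in $m$.} Going from $m$ to $m+1$, every existing summand is multiplied by a larger factor ($m^2$ or $\frac{m}{\ell}(\frac{m}{\ell}-1)$, both increasing in $m$), and the new summands with $i,j$ or $\ell$ equal to $m+1$ are nonnegative. Hence the variance increases in $m$.
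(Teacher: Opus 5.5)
You have correctly identified the delicate step, but the proposal breaks there and cannot be repaired: the rearrangement from \eqref{eq:variance} to the displayed formula is false whenever $m\ge 2$ and $p>0$. With $u=(1-p)^{-1}$, the unspecified ``correction'' in your comparison is
\[
\sum_{i,j=1}^m\frac{u^{j}}{ij}-2\sum_{1\le i<j\le m}\frac{u^{j}}{ij}-\sum_{\ell=1}^m\frac{u^{\ell}}{\ell^{2}}=\sum_{1\le i<j\le m}\frac{u^{i}-u^{j}}{ij},
\]
which is strictly negative for $u>1$. Consequently the displayed formula exceeds \eqref{eq:variance} by $m^2\sum_{i<j}(u^j-u^i)/(ij)$.

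Your coefficient-comparison fallback would expose this rather than rescue it. After cancelling the common terms, the coefficient of $m^2u^j$ is $-2H_{j-1}/j$ on one side and $-H_m/j+1/j^2$ on the other. These agree only if $H_{j-1}+H_j=H_m$, which fails at $j=m$ for every $m\ge2$. Concretely, for $m=2$, \eqref{eq:variance} gives $u^4+4u^3-u^2-2u$. This agrees with a direct first-step analysis on the number of non-empty urns, e.g.\ $\Var(T_{2,1/2})=40$. The displayed formula instead gives $u^4+4u^3+u^2-4u$, which is $44$ at $p=1/2$.

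The sanity checks you propose, $p=0$ and $m=1$, are precisely the cases where the two expressions coincide, so they would not catch this. The paper's own rearrangement line makes the same slip: it treats the summand of $2\sum_{i<j}u^j/(ij)$ as if it were symmetric in $(i,j)$.

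Your symmetrisation $2\sum_{i<j}u^j/(ij)=\sum_{i\ne j}u^{\max(i,j)}/(ij)$ was the right move. Carried through, it gives
\[
\Var(T_{m,p})=m^2\sum_{i,j=1}^m\frac{1}{ij}\Bigl[u^{i+j}-u^{\max(i,j)}\Bigr]+\sum_{\ell=1}^m\frac{m}{\ell}\Bigl(\frac{m}{\ell}-1\Bigr)u^{\ell},
\]
so the formula is correct only after replacing $(1-p)^{-j}$ by $(1-p)^{-\max(i,j)}$.

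With that change your qualitative arguments go through essentially as written. Each bracket $u^{\max(i,j)}\bigl(u^{\min(i,j)}-1\bigr)$ vanishes at $u=1$ and is nonnegative, increasing and convex on $[1,\infty)$, and your $m\mapsto m+1$ comparison is unaffected. One small fix there: the expanded polynomial does not have nonnegative coefficients (already for $m=1$ it is $u^2-u$). Drop that phrase and rely only on each summand being increasing and convex on $[1,\infty)$, composed with the increasing convex map $p\mapsto(1-p)^{-1}$.
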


The asymptotics of the variance is presented in Theorem~\ref{thm:second moment}.

\section{Asymptotic distributions}
\label{asymptotics}
We would like to study
a sequence of clumsiness probabilities $p_1, p_2, \dots$
and the corresponding sequence of random variables $(T_{m,p})_{m\geqslant 1}$.
Our approach is to prove weak limits of $T_{m,p}$ then develop
refined asymptotics of the first two moments in the next section.

First we will need an integral expression
for the \textsc{mgf}. Here is a clever trick from \cite{sofo2009binomial}:
for any $0 < x < 1$,
\begin{align*}
  \phi_G(x)
  &= \sum_{\ell=0}^m \binom{m}{\ell} \binom{m/x - 1}{\ell}^{-1} (1-p)^\ell\\
  &= \sum_{\ell=0}^m \binom{m}{\ell} \left(1-p\right)^\ell
    \frac{\Gammaop(\ell + 1) \Gammaop(m/x-\ell)}{\Gammaop(m/x)}.
\intertext{Use the fact $\Gammaop(\nu+1) = \nu\Gammaop(\nu)$ to introduce the beta function,}
  &= \frac{m}{x} \sum_{\ell=0}^m \binom{m}{\ell} \left(1-p\right)^\ell
    \frac{\Gammaop(\ell + 1) \Gammaop(m/x-\ell)}{\Gammaop(m/x + 1)}\\
  &= \frac{m}{x} \sum_{\ell=0}^m \binom{m}{\ell} \left(1-p\right)^\ell
    \Betaop(\ell+1,m/x-\ell)\\
  &= \frac{m}{x} \sum_{\ell=0}^m \binom{m}{\ell} \left(1-p\right)^\ell
    \int_0^1 y^\ell \left(1-y\right)^{m/x-\ell-1} dy
    \intertext{then through an allowable interchange of integral and sum,}
  &= \frac{m}{x} \int_0^1 (1-y)^{m/x-1}
    \sum_{\ell=0}^m \binom{m}{\ell} \left(1-p\right)^\ell
    y^\ell \left(1-y\right)^{-\ell} dy\\
  &= \frac{m}{x} \int_0^1 \left(1-y\right)^{m/x-1}
    \left(1 + \frac{\left(1-p\right)y}{1-y}\right)^m dy.
\end{align*}
  This implies that the $0 < x < 1$ portion
  of the \textsc{pgf} is
\begin{align*}
  G_{T_{m,p}}(x)
  &= (1-p)^m \binom{m/x - 1}{m}^{-1} \left[
    \frac{m}{x} \int_0^1 \left(1-y\right)^{m/x-1}
    \left(1 + \frac{\left(1-p\right)y}{1-y}\right)^m dy \right]^{-1}\\
  &= (1-p)^m \left[
      \binom{m/x}{m}
      \left(m/x - m\right)
      \int_0^1 \left(1-y\right)^{m/x-m-1}
      \left(1 - p\, y\right)^m dy
    \right]^{-1}.
\end{align*}
To remove the asymptote in the integral, apply
an integration by parts:
\begin{align*}
  \int_0^1 \left(1-y\right)^{m/x-m-1} \left(1-p\,y\right)^m dy
  &= \left[ \left(1-y\right)^{m/x-m} \cdot \frac{-1}{m/x-m}
  \cdot \left(1-p\,y\right)^m \right]_{y=0}^{y=1}\\
  &\quad \, - \int_0^1 (1-y)^{m/x-m} \cdot \frac{-1}{m/x-m} \cdot m \left(1-p\,y\right)^{m-1}
  \left(-p\right) dy\\
  &= \frac{1}{m/x-m} - \int_0^1 \frac{p\,m \left(1-y\right)^{m/x-m}
  \left( 1-p\,y \right)^{m-1}}{m/x-m} \, dy.
\end{align*}
This culminates in the following decomposition, which we will use
to prove limit laws.
\begin{lemma}
  \label{thm:factored mgf}
  The \textsc{mgf} of $T_{m,p}$ has the following form for $t \leqslant 0$,
\[
  M_{T_{m,p}}(t)
  = \binom{m\,e^{-t}}{m}^{-1} \times
    (1-p)^m
    \left[
      1
      -
      \int_0^1 p \, m
      \left(1-x\right)^{m\exp(-t)-m}
      \left(1-p\,x\right)^{m-1} dx
    \right]^{-1}.
\]
\end{lemma}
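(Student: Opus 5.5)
The plan is to combine the integral representation for $G_{T_{m,p}}(x)$, $0<x<1$, derived just before the statement, with the integration by parts computed there. Then substitute $x=e^{t}$ and extend to $t=0$ by continuity. Most of the work is already on the page; what remains is algebra plus a short analytic justification.

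First, for $0<x<1$, insert the integration-by-parts identity into the expression
\[
G_{T_{m,p}}(x)=(1-p)^m\Big[\tbinom{m/x}{m}(m/x-m)\int_0^1(1-y)^{m/x-m-1}(1-py)^m\,dy\Big]^{-1}.
\]
The factor $(m/x-m)$ cancels the prefactor $1/(m/x-m)$ of both terms. The bracket therefore becomes
\[
\tbinom{m/x}{m}\Big[1-\int_0^1 pm(1-y)^{m/x-m}(1-py)^{m-1}\,dy\Big].
\]
Note that the boundary term at $y=1$ vanishes because $m/x-m>0$, and that is exactly where $x<1$ is used.

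Second, set $x=e^{t}$ with $t<0$, so that $m/x=me^{-t}$ and $m/x-m=me^{-t}-m$. Since $G_{T_{m,p}}(e^t)=\E[e^{tT_{m,p}}]=M_{T_{m,p}}(t)$, which is legitimate by Theorem~\ref{thm:pgf} (the pgf converges on a ball of radius greater than $1$), this gives the claimed formula for all $t<0$.

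Third, handle $t=0$ by continuity. The left side $M_{T_{m,p}}$ is continuous at $0$ by Corollary~\ref{thm:mgf}. On the right, $\binom{me^{-t}}{m}\to1$. The integrand $pm(1-x)^{me^{-t}-m}(1-px)^{m-1}$ is bounded by $pm$ for $t\le0$, so dominated convergence sends the integral to $\int_0^1 pm(1-px)^{m-1}\,dx=1-(1-p)^m$. The bracket thus tends to $(1-p)^m>0$, and the right side tends to $1=M_{T_{m,p}}(0)$. So the identity also holds at $t=0$.

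The main obstacle is simply the bookkeeping: confirming that the boundary term at $y=1$ vanishes, and that the bracket stays nonzero so the reciprocal makes sense. For $t<0$ the bracket is nonzero automatically, since it equals a reciprocal of a positive quantity by the derivation. At $t=0$ this is settled by the explicit limit $(1-p)^m$ computed above.
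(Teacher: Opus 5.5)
Your proposal is correct and follows the paper's own route. You substitute the integration-by-parts identity into the Beta-integral representation of $G_{T_{m,p}}(x)$ for $0<x<1$, then set $x=e^{t}$. Your added treatment of $t=0$, which the paper leaves implicit, is sound; it could even be done by direct evaluation, since with exponent $me^{0}-m=0$ both sides visibly equal $1$.
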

The second factor in Lemma \ref{thm:factored mgf}
becomes $1$ when $p = 0$,
which confirms that the classical collector problem $T_{m, 0}$
has \textsc{mgf} $1/\binom{m\,e^{-t}}{m}$.
It is well-known from \cite{erdos1961coupon}
that $(T_{m, 0} - m\log{m})/m$ converges to a standard Gumbel.
We will extend this result to the set of all $p = o(1/m)$ sequences.
First learn the notation
\begin{equation}
  \label{eq:extra term}
  I_{m, p}(t) \coloneqq
    (1-p)^m
    \left[
      1
      -
      \int_0^1 p \, m
      \left(1-x\right)^{m\exp(-t)-m}
      \left(1-p\,x\right)^{m-1} dx
    \right]^{-1}.
\end{equation}
\begin{lemma}
  \label{thm:coupling}
  Within our coupling of the collection times (Definition~\ref{def:coupling}),
  $T_{m,0}$ is independent of $T_{m,p} - T_{m,0}$ and this difference
  has \textsc{mgf}
  \[
    M_{T_{m, p} - T_{m, 0}}(t) = I_{m, p}(t), \quad t \leqslant 0.
  \]
  \begin{proof}
    If $T_{m, 0} = k$, then the value of $T_{m, p} - T_{m, 0}$
    is a deterministic function of the initial clumsiness $\{ U_{L(i,k)} \}_{i \in [m]}$,
    the future updates $\{ C_n \}_{n > k}$,
    and the future clumsiness $\{ U_n \}_{n > k}$. Call this function $f$.
    We will make $f$ explicit. Consider an initial clumsiness vector
    $\mathbf{x} = (x_1, \dots, x_m)^\top \in [0, 1]^m$,
    coupon updates $\mathbf{y} = (y_n)_{n > 0} \in [m]^\mathbb{N}$,
    and future clumsiness $\mathbf{z} = (z_n)_{n > 0} \in [0, 1]^\mathbb{N}$.
    Then define
    $L'(i, n; \mathbf{y})$ as the most recent update of coupon $i$
    no later than $n$,
    \[
      L'(i, n; \mathbf{y}) \coloneqq
      \sup\{ k \leqslant n \mid y_k = i \}, \quad
      i \in [m], \quad n \geqslant 0,
    \]
    where naturally $\sup\emptyset = -\infty$ and $L'(i, 0; \mathbf{y}) = -\infty$.
    Define
    \begin{align*}
      f(\mathbf{x}; \mathbf{y}; \mathbf{z})
      \coloneqq \inf \{ n \geqslant 0 \mid {} &\text{for all $i \in [m]$,}\\
          &\text{if $x_i > 1-p$ or $L'(i, n; \mathbf{y}) > 0$,}\\
          &\text{then $L'(i, n; \mathbf{y}) > 0$
          and $z_{L'(i, n; \mathbf{y})} \leq 1-p$}
          \}.
    \end{align*}
    Thus, for any integers $\ell \geqslant m$ and $x \geqslant 0$,
    \begin{align*}
      \PP\left( T_{m,p} - T_{m,0} = x \right)
      &= \sum_{k \geqslant m}
        \PP\left( T_{m,p} - T_{m,0} = x \mid T_{m,0} = k\right)
        \PP(T_{m,0} = k)\\
      &= \sum_{k \geqslant m} \PP\left(f(U_{L(1, k)}, \dots, U_{L(m, k)};
      C_{k+1}, C_{k+2}, \dots;
      U_{k+1}, U_{k+2}, \dots) = x\right)\\
      &\qquad \qquad \qquad \times \PP\left(T_{m,0} = k\right)\\
      &= \sum_{k \geqslant m} \PP\left(f(U_{L(1, \ell)}, \dots, U_{L(m, \ell)};
      C_{\ell+1}, C_{\ell+2}, \dots;
      U_{\ell+1}, U_{\ell+2}, \dots) = x\right)\\
      &\qquad \qquad \qquad \times \PP\left(T_{m,0} = k\right)\\
      &= \PP\left(f(U_{L(1, \ell)}, \dots, U_{L(m, \ell)};
      C_{\ell+1}, C_{\ell+2}, \dots;
      U_{\ell+1}, U_{\ell+2}, \dots) = x\right)\\
      &\qquad \qquad \qquad \times \underbrace{
        \sum_{k \geqslant m} \PP\left(T_{m,0} = k\right)
      }_1\\
      &= \PP\left(T_{m,p} - T_{m,0} = x \mid T_{m,0} = \ell\right).
    \end{align*}
    The third line uses our i.i.d.~assumptions from
    Definition~\ref{def:coupling}.
    This proves the independence of $T_{m,0}$ and $T_{m,p} - T_{m, 0}$.
    Hence the factorisation
    $M_{T_{m,0}}(t)\,M_{T_{m,p} - T_{m,0}}(t) = M_{T_{m,0}}\,I_{m,p}(t)$.
    These quantities are non-zero; divide both sides to conclude the proof.
  \end{proof}
\end{lemma}
We will make use of the following continuity theorem for moment-generating functions. 
\begin{theorem}[Convergence theorem \cite{Yakymiv}]
  \label{thm:convergence}
  Let $X$ and $X_1, X_2, \dots$ be random variables with moment-generating
  functions $M_X$ and $M_{X_1}, M_{X_2}, \dots$ respectively.
  Assume these \textsc{mgf}s are finite
  on some interval $I \subseteq \mathbb{R}$ that is bounded, open and non-empty.
  Then $M_{X_m}$ converges pointwise to $M_X$ on $I$
  if and only if
  \begin{enumerate}[(i)]
    \item $X_m$ converges weakly to $X$; and
    \item $\sup_m M_{X_m}(t) < \infty$ for all $t \in I$.
  \end{enumerate}
\end{theorem}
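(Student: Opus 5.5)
The plan is to treat the two directions separately. The reverse direction rests on uniform integrability. The forward direction rests on exponential tilting, which lets us reduce to the classical Curtiss continuity theorem for MGFs finite on a neighbourhood of the origin. Write $I=(a,b)$, let $P_m$ be the law of $X_m$ and $P$ the law of $X$. Note that $I$ need not contain $0$; this is the whole difficulty, and it is why tilting is needed.

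\emph{Sufficiency of (i) and (ii).} Fix $t\in I$ and choose $s_1<t<s_2$ in $I$. Then
\[
\frac{e^{tx}}{e^{s_1x}+e^{s_2x}}\to 0 \quad \text{as } |x|\to\infty .
\]
Together with $\sup_m \E\bigl[e^{s_1X_m}+e^{s_2X_m}\bigr]<\infty$, which is (ii), this shows that the family $\{e^{tX_m}\}_m$ is uniformly integrable. The map $x\mapsto e^{tx}$ is continuous, and $X_m\Rightarrow X$ by (i). By Skorokhod's representation theorem, or by the standard ``weak convergence plus uniform integrability'' criterion, we get $M_{X_m}(t)\to M_X(t)$.

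\emph{Necessity.} Condition (ii) is immediate, since a convergent real sequence is bounded. For (i), fix $t_0=(a+b)/2$ and $\delta=(b-a)/2$, and define the tilted laws
\[
Q_m(dx)=e^{t_0x}P_m(dx)/M_{X_m}(t_0), \qquad Q(dx)=e^{t_0x}P(dx)/M_X(t_0).
\]
Their MGFs are $s\mapsto M_{X_m}(t_0+s)/M_{X_m}(t_0)$ and $s\mapsto M_X(t_0+s)/M_X(t_0)$ respectively. By hypothesis the first converges pointwise to the second on $(-\delta,\delta)$, a neighbourhood of $0$. Curtiss' classical theorem then gives $Q_m\Rightarrow Q$.

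Next we untilt. Let $g$ be continuous with compact support. Then $g(x)e^{-t_0x}$ is bounded and continuous, so
\[
\int g\,dP_m = M_{X_m}(t_0)\int g(x)e^{-t_0x}\,Q_m(dx)\;\longrightarrow\; M_X(t_0)\int g(x)e^{-t_0x}\,Q(dx)=\int g\,dP .
\]
Thus $P_m\to P$ vaguely.

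The main obstacle is this final step, because vague convergence can lose mass to $\pm\infty$. The function $e^{-t_0x}$ is unbounded, so the untilting cannot be pushed past compactly supported test functions. Moreover, if $I\subset(0,\infty)$, nothing in the hypotheses directly controls the lower tail $\PP(X_m<-K)$. The way around this is to use that the limit $M_X$ is assumed to be the MGF of a genuine random variable. Consequently the vague limit $P$ is a probability measure, and it is identified correctly through uniqueness of the tilted law $Q$ (Curtiss' uniqueness near $0$). A sequence of probability measures converging vaguely to a probability measure converges weakly, because no mass escapes. Hence $X_m\Rightarrow X$, which is (i).
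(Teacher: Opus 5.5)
The paper does not prove this statement; it quotes it from Yakymiv. There is no internal argument to compare against, so your proposal stands as a self-contained proof, and it is correct.

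The sufficiency half is complete as written. On $\{e^{tx}>R\}$ you have $e^{tx}\le\varepsilon(R)\,(e^{s_1x}+e^{s_2x})$ with $\varepsilon(R)\to0$. Together with (ii), this gives uniform integrability of $e^{tX_m}$, and weak convergence plus uniform integrability gives convergence of the means.

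For necessity, the exponential tilt at the midpoint $t_0$ is the right device. It moves the interval of convergence onto $(-\delta,\delta)$, Curtiss's theorem then gives $Q_m\Rightarrow Q$, and your untilting identity gives $\int g\,dP_m\to\int g\,dP$ for every continuous compactly supported $g$.

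Your final paragraph makes the last step sound harder than it is. The untilting computation already identifies the vague limit as the law $P$ of $X$ itself, so you need no further appeal to uniqueness of $Q$. What you should write out instead is the tightness estimate. Given $\varepsilon>0$, pick $g$ continuous with compact support, $0\le g\le 1$ and $\int g\,dP>1-\varepsilon$. Then $P_m(\operatorname{supp} g)\ge\int g\,dP_m>1-\varepsilon$ for all large $m$. This controls both tails at once, including the one the hypotheses do not see directly: the lower tail if $I\subset(0,\infty)$, or the upper tail if $I\subset(-\infty,0)$. Tightness plus vague convergence then gives $X_m\Rightarrow X$.

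This is exactly the case the paper needs. Every application of the theorem in Section~\ref{asymptotics} (subcritical, supercritical and critical) takes limits only over $t<0$, so $0\notin I$ and the classical Curtiss theorem does not apply directly. Your tilting reduction is precisely what bridges that gap.
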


\begin{definition}
  \label{def:Gumbel}
  The standard Gumbel distribution has cumulative distribution function
  \[
    F(x) = \exp\left(-e^{-x}\right),
    \quad x \in \mathbb{R},
  \]
  and moment-generating function $t \mapsto \Gammaop\left(1- t \right)$.
\end{definition}

\begin{proposition}
  \label{thm:classical mgf convergence}
  The following limit holds for all $t < 0$ as $m \to \infty$,
  \[
    m^{-t} \binom{m \, e^{-t/m}}{m}^{-1} 
    \longrightarrow \Gammaop(1-t).
  \]
  \begin{proof}
    Combine the facts $m\,e^{-t/m} = m - t + O(m^{-1})$
    and $\binom{-\nu}{m} = (-1)^m \binom{\nu+m-1}{m}$
    from
    \cite[pp.~49--56]{oldham2009atlas}
    to write
    \begin{align*}
      m^{-t} \binom{m\,e^{-t/m}}{m}^{-1}
      &= m^{-t} \binom{m-t+O(m^{-1})}{m}^{-1}
      = m^{-t} \binom{m+(1-t+O(m^{-1}))-1}{m}^{-1}\\
      &= m^{-t} \left(-1\right)^m \binom{t-1+O(m^{-1})}{m}^{-1}.
      \intertext{Next use the asymptotic expression
      $\binom{\nu}{m}^{-1} \sim (-1)^m \, m^{\nu+1} \Gammaop(-\nu)$,}
      &\sim m^{-t} \, m^{t+O(m^{-1})} \Gammaop(1-t+O(m^{-1}))
      \sim \Gammaop(1-t). \qedhere
    \end{align*}
  \end{proof}
\end{proposition}

\begin{theorem}[Subcritical shape]
  \label{thm:subcritical}
  Suppose $p = o(1/m)$. Then
  $(T_{m, p} - T_{m, 0})/m \stackrel{P}{\to} 0$ as $m \to \infty$.
  It follows that $(T_{m,p} - m \log{m})/m$ converges
  in law to a standard Gumbel distribution.
  \begin{proof}
    The first claim follows by observing that $\PP(T_{m, 0} = T_{m, p}) = (1-p)^m \to 1$.
    Then Proposition~\ref{thm:classical mgf convergence}
    confirms $m^{-t}\,M_{T_{m,0}}(t/m) \to \Gammaop(1-t)$.
    By Theorem~\ref{thm:convergence} and Definition~\ref{def:Gumbel},
    $(T_{m,0} - m \log{m})/m$ converges in law to a standard Gumbel.
    By Slutsky's theorem, so does $(T_{m,p} - m \log{m})/m$.
  \end{proof}
\end{theorem}

\begin{lemma}
  \label{thm:supercritical}
  If $p = \omega(1/m)$ then
  $p\,(1-p)^m\,(T_{m,p} - T_{m,0})$ converges
  in law to an exponential distribution
  with rate $1$.
  \begin{proof}
    Fix $t < 0$ and recall definition \eqref{eq:extra term}.
    The reciprocal of the
    moment-generating function of
    $p\,(1-p)^m\,(T_{m,p} - T_{m,0})$ is
    \begin{align*}
      \frac{1}{I_{m,p}(p\,(1-p)^m\,t)}
      &=
      (1-p)^{-m} - (1-p)^{-m} \int_0^1 p\,m
      \left(1-x\right)^{m\exp(p\,(1-p)^m\,(-t))-m}
      \left(1-p\,x\right)^{m-1} dx \\
      &=
        (1-p)^{-m} - (1-p)^{-m} \int_0^1 p\,m
      \left(1 + x\,t\,p\,m\,(1-p)^m\right)
      \left(1-p\,x\right)^{m-1} dx + o(1)
    \intertext{where we have used the binomial approximation
      $(1-x)^\alpha = 1 - \alpha \, x + O(\alpha^2)$, uniform in $x$,
      and a first-order expansion
      $m\exp(p\,(1-p)^m \, (-t)) - m = p\,m\,(1-p)^m\,(-t)
      + O(p^2\,m\,(1-p)^{2m})$.
      Together these give
      $(1-x)^{m\exp(p\,(1-p)^m\,(-t)) - m}
      = 1 + x\,t\,p\,m\,(1-p)^m + O(p^2\,m^2\,(1-p)^{2m})$.
      This error suffices because, upon distribution to form another integral,
      we are left with a trivial term
      $O(p^3\,m^3\,(1-p)^m) \int_0^1 \left(1-p\,x\right)^{m-1} dx =
      O((p\,m)^3 \, e^{-2pm}) = o(1)$, which we have written above.
      The remaining steps involve the dominated convergence theorem:
      }
      &=
        (1-p)^{-m} - (1-p)^{-m} \int_0^{pm}
      \left(1 + x\,t\,(1-p)^m\right)
      (1-x/m)^{m-1}\, dx + o(1)\\
      &= (1-p)^{-m} - (1-p)^{-m} \underbrace{\int_0^{pm} 
          \left(1-x/m\right)^{m-1} dx}_{1-(1-p)^m}\\
      &\quad\, -
        {\underbrace{\int_0^{pm}
          x\,t
          \left(1-x/m\right)^{m-1} dx}_{t+o(1)}} + o(1) \\
      &= 1 - t + o(1).
    \end{align*}
    The \textsc{mgf} of a rate $1$ exponential is
    $(1-t)^{-1}$, so we are done by Theorem \ref{thm:convergence}.
  \end{proof}
\end{lemma}

\begin{theorem}[Supercritical shape]
  \label{thm:supercritical shape}
  If $p = \omega(1 / m)$ then $p \, (1-p)^m \,
  (T_{m,p} - m \log{m})$
  converges in law to an exponential distribution with rate $1$.
  \begin{proof}
    Recall from Lemma \ref{thm:coupling} that we can
    decompose $p \, (1-p)^m \, (T_{m,p} - m \log{m})$ as
    the sum of independent random variables
    $p\,(1-p)^m\,(T_{m, 0} - m\log{m})$
    and $p\,(1-p)^m \, (T_{m,p} - T_{m,0})$.
    The second random variable~has an exponential limit
    by Theorem~\ref{thm:supercritical}.

    The classical problem has mean
    $\E T_{m, 0} = m\log{m} + O(m)$ and
    variance $\Var(T_{m,0}) = O(m^2)$. Therefore,
    the first random variable's mean is 
    \[
      \E \left[ p\,(1-p)^m\left(T_{m,0} - m\log{m}\right) \right] = p \, (1-p)^m \, O(m) =
      O\left((p\,m) \, e^{-pm}\right) = o(1),
    \]
    and its variance is
    \[
      \Var\left( p\,(1-p)^m\,\left(T_{m, 0} - m\log{m}\right) \right) =
      p^2 \, (1-p)^{2m} \, O(m^2) =
      O\left((p\,m)^2 \, e^{-2pm}\right) = o(1).
    \]
    This implies $p\,(1-p)^m \, (T_{m,0} - m \log{m}) 
    \stackrel{P}{\to} 0$.
    Extend the convergence result using Slutsky's theorem.
  \end{proof}
\end{theorem}
We note that the special case of Theorem~\ref{thm:supercritical shape} with $p>0$ fixed also appears in \cite[Theorem 4.4]{long2026clumsy},
proved via very different methods.  

The subcritical and supercritical cases
described above reflect a competition
between the variances of $T_{m,0}$ and
$T_{m,p} - T_{m,0}$.
The more interesting case is $p \sim c/m$.
Here, the variances of $T_{m, 0}$ and $T_{m, p} - T_{m, 0}$
are both $\Theta(m^2)$.

\begin{lemma}
  \label{thm:birth-death}
  Fix $c > 0$ and let $Q_t$ be the birth-death process
  with
  \begin{enumerate}[(i)]
    \item $Q_0 \sim \operatorname{Poisson}(c)$;
    \item $n \mapsto n +1$ transitions of rate $c$; and
    \item $n \mapsto n - 1$ transitions of rate $n$.
  \end{enumerate}
  Then the first hitting time of $0$,
  \[
    \tau_c \coloneqq \inf\{ t \geqslant 0 \mid Q_t = 0 \},
  \]
  has Laplace transform
  \[
    \E e^{-s\tau_c} = e^{-c}
    \left[1 - c\int_0^1 \left(1-x\right)^{s} e^{-cx} \, dx
    \right]^{-1},
    \quad s \geqslant 0.
  \]
  \begin{proof}
    Fix $s > 0$. By total expectation,
    \[
      \E e^{-s \tau_c}
      = \sum_{n = 0}^\infty
      \E\left(e^{-s\tau_c} \middle| Q_0 = n\right)
      \PP(Q_0 = n)\\
      = e^{-c} \sum_{n = 0}^\infty \frac{c^n}{n!}\, \phi_n(s)
    \]
    where $\phi_n(s) \coloneqq \mathbb{E}(e^{-s\tau_c} \mid 
    Q_0 = n)$. So if $\Phi_s$ is defined on $[0, c]$ by
    \begin{equation}
      \label{eq:Phi}
      \Phi_s(y) \coloneqq \sum_{n=0}^\infty
      \phi_n(s) \, \frac{y^n}{n!}
      \quad \text{then} \quad
      \E(e^{-s\tau_c}) = e^{-c} \, \Phi_s(c).
    \end{equation}
    Clearly $\phi_0(s) = 1$ and first-step analysis gives
    \[
      (c+n+s) \, \phi_n(s) = 
      n \, \phi_{n-1}(s) + c \, \phi_{n+1}(s),
      \quad n > 0.
    \]
    By taking a weighted sum of this recurrence,
    \[
      \sum_{n = 1}^\infty
      \frac{y^n}{n!} \left(c + n + s \right)
      \phi_n(s) = \sum_{n = 1}^\infty
      \frac{y^n}{n!}
      \left[ n \, \phi_{n-1}(s)
      + c \, \phi_{n+1}(s)\right].
    \]
    The left-hand side is
    \[
      (c+s) \sum_{n=1}^\infty
      \frac{y^n}{n!} \, \phi_n(s)
      + y \sum_{n=1}^\infty \frac{y^{n-1}}{(n-1)!} \, \phi_n(s)
      =
      (c+s)\left[\Phi_s(y) - 1\right]
      + y \, \Phi_s'(y)
    \]
    while the right-hand side is
    \[
      y \sum_{n=1}^\infty
      \frac{y^{n-1}}{(n-1)!} \, \phi_{n-1}(s)
      + c \sum_{n=1}^\infty \frac{y^n}{n!} \, \phi_{n+1}(s)
      = y \, \Phi_s(y) + c
      \left[ \Phi_s'(y) - \phi_1(s)\right].
    \]
    We are left with the differential equation
    \[
      (c+s) \left[\Phi_s(y) - 1\right]
      + y \, \Phi_s'(y)
      = y \, \Phi_s(y) + c
      \left[\Phi_s'(y) - \phi_1(s) \right]
    \]
    or equivalently,
    the initial value problem
    \begin{align}
      \label{eq:IVP}
      \left(c-y\right) \Phi_s'(y)
      + \left(- c + y - s\right) \Phi_s(y)
      &= \overbrace{c \, \phi_1(s) - c - s}^{-K}\\ \notag
      \Phi_s'(y) + \left(-1-\frac{s}{c-y}\right) \Phi_s(y)
      &= -\frac{K}{c-y}
    \end{align}
    with condition $\Phi_s(0) = 1$.
    Use the integrating factor $I_s(y) = e^{-y} \, (c-y)^s$ to write
    \begin{align*}
      \left[e^{-y} \left(c-y\right)^s \Phi_s(y)\right]'
      &= \frac{-K}{c-y} \, e^{-y} \left(c-y\right)^s\\
      e^{-y} \left(c-y\right)^s \Phi_s(y)
      - e^{-0} \left(c - 0\right)^s \Phi_s(0) 
      &= \int_0^y \frac{-K}{c-x} \, e^{-x} \left(c - x\right)^s \, dx
      \intertext{for any $y < c$. It's clear from \eqref{eq:Phi} that
      $\Phi_s$ is continuous, so take the $y \uparrow c$ limit:}
      \frac{c^s}{\int_0^c e^{-x} \left(c-x\right)^{s-1} dx}
      &= K.
    \end{align*}
    Finally, the $y \uparrow c$ limit of \eqref{eq:IVP}
    informs us $-s \, \Phi_s(c) = -K$, thus
    \begin{align*}
      \E e^{-s\tau_c}
      &= e^{-c} \, \Phi_s(c)
      = \frac{e^{-c}\, c^s}{\int_0^c e^{-x} \, s \left(c-x\right)^{s-1} dx}\\
      &= \frac{e^{-c} \, c^s}{c^s - \int_0^c e^{-x} \left(c-x\right)^s dx}
      = \frac{e^{-c} \, c^s}{c^s - \int_0^1 c \, e^{-cx} \left(c-c\,x\right)^s dx}\\
      &= e^{-c} \left[1 - c \int_0^1 \left(1-x\right)^s e^{-cx}\, dx \right]^{-1}.
      \qedhere
    \end{align*}
  \end{proof}
\end{lemma}

\begin{theorem}[Critical shapes]
  Let $p \sim c/m$ for some $c > 0$
  and let $\tau_c$ be the first hitting time of
  $0$ in the birth-death process defined by
  Lemma~\ref{thm:birth-death}.
  If $G$ is a standard Gumbel random variable,
  independent of $\tau_c$, then
  $(T_{m,p} - m\log{m})/m$ converges in law to $G + \tau_c$.
  \begin{proof}
    Lemma \ref{thm:coupling} reveals
    $(T_{m, p} - m\log{m})/m$ is the independent sum
    of $(T_{m,0} - m\log{m})/m$ and $(T_{m,p} - T_{m,0})/m$.
    The first term converges to $G$ weakly
    by Lemma~\ref{thm:subcritical}.

    The second term has \textsc{mgf} $I_{m,p}(t/m)$ for $t < 0$,
    \[
      M_{(T_{m,p} - T_{m,0})/m}(t)
      = (1-p)^m
      \left[
        1
        -
        \int_0^1 p \, m
        \left(1-x\right)^{m\exp(-t/m)-m}
        \left(1-p\,x\right)^{m-1} dx
      \right]^{-1}.
    \]
    Then since $p\,m \to c$ and $(1-p)^m \to e^{-c}$,
    the integrand is bounded and we have
    access to the dominated convergence theorem;
    for each $0 < x < 1$ it holds
    \[
      p\,m\left(1-x\right)^{m\exp(-t/m)-m} \left(1-p\,x\right)^{m-1}
      \longrightarrow e^{c} \left(1-x\right)^{-t}
      e^{-c x},
    \]
    which implies
    \[
      M_{(T_{m,p} - T_{m,0})/m}(t)
      \longrightarrow e^{-c} \left[
        1 - \int_0^1 c \left(1-x\right)^{-t} e^{-c x} \, dx \right]^{-1}.
    \]
    This is the \textsc{mgf} of $\tau_c$ (Lemma~\ref{thm:birth-death}).
    So, we have shown $M_{(T_{m, p} - m \log{m})/m}(t)
    \to M_G(t) \, M_{\tau_c}(t) = M_{G+\tau_c}(t)$
    for all negative $t$. The convergence theorem confirms
    $(T_{m,p} - m \log{m})/m \Rightarrow G + \tau_c$.
  \end{proof}
\end{theorem}

The birth-death process described in Lemma~\ref{thm:birth-death}
is known as an M/M/$\infty$ queue.
The number of uncollected coupons at time $T_{m, 0}$
is binomially distributed and its weak limit is
$Q_0$. So, the scaled excess $(T_{m,p} - T_{m,0})/m$ can be interpreted
as the waiting time for a queue of uncollected coupons to become empty.

\section{Tail and asymptotics}
\label{practical}
In this section we assume $p > 0$.
Here is a tail bound that we believe is
more amenable to analysis than Corollary~\ref{thm:tail}.
\begin{proposition}
  For any $r > 0$, the tail event $\{ T_{m, p} \geq r \}$ has the following
  upper bound:
  \begin{align*}
      \PP(T_{m, p} \geq r)
      &\leq 2 - 2 
      \binom{m\exp(1/r)}{m}^{-1}
      (1-p)^m\\
      &\phantom{{} \leq 2 - 2} \times
      \left[
      1 - \int_0^1 p \, m
      \left(1-x\right)^{m\exp(1/r)-m}
      \left(1-p\,x\right)^{m-1} dx
      \right]^{-1}.
  \end{align*}
  \begin{proof}
    Since $T_{m, p}$ is non-negative,
    we can invoke a common bound from \cite[p.~126]{kallenberg2021probability},
    namely $\PP(T_{m,p} \geq r) \leq 2\left(1 - M_{T_{m,p}}(-1/r)\right)$.
    Substitute Lemma \ref{thm:factored mgf}.
  \end{proof}
\end{proposition}
Lemmas \ref{thm:factored mgf} and \ref{thm:coupling} let us handle the first two moments of $T_{m, p}$ carefully.
We note that Corollary 4.5 of \cite{long2026clumsy} finds the leading order asymptotic growth of every moment, in the case of fixed $p > 0$.

\begin{theorem}[Mean]
  \label{thm:first moment}
  The first moment of $T_{m, p}$ is
  \[
    \E T_{m, p} = m \, H_m + 
      \frac{(1-p)^{-m}}{p}
      \left[
      (p\,m)^2\left(1-p\right)^{m-1}
      +
      \int_0^1 p^3 \, m^2 \left(m-1\right)
      \left(1-p\,x\right)^{m-2} L(x) \, dx
      \right]
  \]
  where $H_m$ is the $m^\text{th}$ harmonic number
  and $L(x) \coloneqq x + (1-x) \log(1-x)$.
  If $p = p_m$ is an $o(1/m)$ sequence then this becomes
  \[
    \E T_{m, p} = m\,H_m
    + p\,m^2 + \frac{1}{4} \, p^2 \, m^3
    + O\left(p^2 \, m^2\right) + O\left(p^3 \, m^4\right).
  \]
  as $m \to \infty$. If $p \sim c/m$ then
  \[
    \E T_{m, p} = m\,H_m + \left(1 + o(1)\right)
      m \left[ c + c^2 \int_0^1 e^{-cx} \, L(x) \, dx\right].
  \]
  If $p = \omega(1/m)$ then
  \[
    \E T_{m,p} = m\,H_m + \frac{(1-p)^{-m}}{p} \left(1
    + O\left(\frac{1}{p\,m}\right)\right).
  \]
  In particular, for $p$ constant,
  \begin{align*}
    \E T_{m,p}
    = m \, H_m
    + \frac{(1-p)^{-m}}{p}&\Bigg\{
      1 + \left(-6p + 5 + \frac{1}{p}\right) m^{-1}\\
    &\phantom{\Bigg\{} {} + \left(45\,p^2 - 62\,p + 8 + \frac{7}{p} + \frac{2}{p^2} \right) m^{-2}
      + O\left(m^{-3}\right)
    \Bigg\}.
  \end{align*}
  \begin{proof}
    The difference $T_{m, p} - T_{m, 0}$ has \textsc{mgf}
    $I_{m, p}(t)$. Corollary~\ref{thm:mgf} implies
    the \textsc{mgf} of $T_{m, p} - T_{m, 0}$
    exists on a neighbourhood of the origin, hence
    $\E [T_{m, p} - T_{m, 0}] = \lim_{t \uparrow 0} I_{m,p}'(t)$.
    With the knowledge 
    \begin{equation}
      \label{eq:one deriv}
      \frac{\partial}{\partial t} \left( 1 - x \right)^{m \exp(-t) - m}
      = \log\left(\frac{1}{1-x}\right) \, m \, e^{-t}
      \left( 1 - x \right)^{m \exp(-t) - m},
    \end{equation}
    we differentiate the integral expression of
    \eqref{eq:extra term} to find
    \begin{align}
      \notag
      \E \left[T_{m,p} - T_{m,0}\right]
      &= (1-p)^m
      \left[
        1 - \overbrace{\int_0^1 p\, m\left(1-p\,x\right)^{m-1}
        \, dx}^{1 - (1-p)^m} 
      \right]^{-2}\\
      \notag
      &\quad \times \int_0^1 p\,m^2 \left(1-p\,x\right)^{m-1}
      \log\left( \frac{1}{1-x} \right) dx\\
      \label{eq:bad first moment}
      &= \frac{(1-p)^{-m}}{p}
      \int_0^1 (p\,m)^2 \left(1-p\,x\right)^{m-1}
      \log\left( \frac{1}{1-x} \right) dx.
      \intertext{Notice that $L(x) = x + (1-x) \log(1-x)$
      is an antiderivative of $-\log(1-x)$ that
      increases from $L(0) = 0$ to $L(1) = 1$. Hence, by an
      integration by parts,}
      \label{eq:diff mean}
      &= \frac{(1-p)^{-m}}{p}
      \left[
      (p\,m)^2\left(1-p\right)^{m-1}
      +
      \int_0^1 p^3 \, m^2 \left(m-1\right)
      \left(1-p\,x\right)^{m-2} L(x) \, dx
      \right].
  \end{align}
    Add on $\E T_{m,0} = m \, H_m$ to find our expression for
    the first moment.

    If $p = o(1/m)$ then use the bound $1 \geq (1-p\,x)^{m-2} \geq e^{O(pm)} = 1 + O(p\,m)$,
    uniform in $x$, within \eqref{eq:diff mean}.
    What remains is straightforward: $\int_0^1 L(x) \, dx = 1/4$ and
    \begin{align*}
      \E \left[T_{m,p} - T_{m,0}\right]
      &= \frac{(p\,m)^2}{p\left(1-p\right)}
      + p^2\,m^3\,(1 + O(1/m))\,(1 + O(p\,m))\,\frac{1}{4}\\
      &= p\,m^2 + O\left(p^2\,m^2\right) + \frac{1}{4} \, p^2 \, m^3
      + O\left(p^2 \,m^2\right) + O\left(p^3 \, m^4\right) + O\left(p^3\,m^3\right).
    \end{align*}
    Remove redundant terms to conclude this case.

    If instead $p \sim c/m$ then by dominated convergence we find
    \[
      \E \left[ T_{m,p} - T_{m,0} \right]
      = \left(1+o(1)\right) c\,m
      + \left(1 + o(1)\right) c^2 \, m \int_0^1 e^{-cx} \, L(x) \, dx.
    \]

    Finally assume $p = \omega(1/m)$. 
    The first term in \eqref{eq:diff mean} is negligible because
    \[
      (p\,m)^2 \left(1-p\right)^{m-1} = O\left( p^2 \, m^2\,e^{-pm} \right).
    \]
    Then, to handle the second term, use the Taylor series of
    $\log(1-p\,x)$:
    \begin{align*}
      (1-p\,x)^{m-2} &=
      \exp\left(-p\,x\left(m-2\right) -\frac{(p\,x)^2}{2}\left(m-2\right)
      - \cdots \right)\\
      &= \exp\left(-p\left(m-2\right) \underbrace{\left[x + \frac{(p\,x)^2}{2}
      + \frac{(p\,x)^3}{3} + \cdots \right]}_{h(x)} \right).
    \end{align*}
    Define $h^\bot(x) \coloneqq x$ and $h^\top(x) \coloneqq \sum_{k\geqslant 1} x^k/k$,
    so that
    \[
      h^\bot(x) \leq h(x) \leq h^\top(x)
    \]
    are bounds which hold uniformly in $m$. This results
    in a lower bound
    \[
      \int_0^1 p^3 \, m^2 \left(m-1\right)
      \left(1-p\,x\right)^{m-2} L(x) \, dx
      \geq p^3 \, m^2 \left(m-1\right)
      \int_0^1 e^{-p\left(m-2\right)h^\top(x)} \, L(x) \, dx.
    \]
    Then, equipped with Laplace's method \cite[p.~58]{wong1989asymptotics}
    and 
    the expansion of $L$,
    \[
      L(x) = \frac{x^2}{2} + \frac{x^3}{6} + \frac{x^4}{12} + O(x^5)
      \quad \text{as $x \downarrow 0$,}
    \]
    we obtain
    \[
      \int_0^1 e^{-p\left(m-2\right)h^\top(x)} \, L(x) \, dx 
      = \frac{1}{(p\,(m-2))^3} + O\left(\frac{1}{(p\,(m-2))^4}\right).
    \]
    To do this we considered the limit $p\left( m-2\right) \to \infty$.
    Repeat this argument for the upper bound, which uses $h^\bot(x)$.
    Then we have shown
    \[
      \int_0^1 p^3 \, m^2 \left(m-1\right)
      \left(1-p\,x\right)^{m-2} L(x) \, dx
      = 1 + O\left(\frac{1}{p\,m}\right)
    \]
    and concluded this case.

    In particular, if $p$ is constant then
    $h$ does not depend on $m$. So Laplace's method can be applied directly,
    \begin{align}
      \notag
      \int_0^1 \left(1-p\,x\right)^{m-2} L(x) \, dx
      &= \int_0^1
      e^{-p(m-2)h(x)} \, L(x) \, dx\\
      \label{eq:c0 c1 c2}
      &= \frac{2\,c_0}{(p\,(m-2))^3}
      + \frac{6\,c_1}{(p\,(m-2))^4}
      + \frac{24\,c_2}{(p\,(m-2))^5}
      + O\left(\frac{1}{(p\,(m-2))^6}\right).
    \end{align}
    The constants $c_0, c_1, c_2$ are derived from the expansions of $L$ and $h$
    by the method given in the reference:
    \[
      c_0 = \frac{1}{2}, \quad
      c_1 = \frac{1}{6} - p^2, \quad
      c_2 = \frac{1}{12} - \frac{5\,p^2}{12}
      - \frac{5 \, p^3}{6} + \frac{15\,p^4}{8}.
    \]
    Multiply \eqref{eq:c0 c1 c2} by $p^3 \, m^2\left(m-1\right)$ and perform the tedious expansion
    to finish the proof.
  \end{proof}
\end{theorem}

\begin{theorem}[Variance]
  \label{thm:second moment}
  The second moment of $T_{m,p} - T_{m,0}$ is
  \begin{equation}
    \label{eq:concrete second moment}
    \begin{aligned}
    \E\left[\left(T_{m,p} - T_{m,0}\right)^2 \right]
    &= 2\E\left[T_{m,p} - T_{m,0}\right]^2
    - \E\left[T_{m,p} - T_{m,0}\right]\\
      &\quad + 
      \frac{(1-p)^{-m}}{p} \Bigg\{ 2\,p^2\,m^3 \left(1-p\right)^{m-1}\\
      &\qquad +
      \int_0^1 p^3\, m^3 \left(m-1\right)
      \left(1 - p\,x\right)^{m-2} N(x) \, dx
      \Bigg\},
    \end{aligned}
  \end{equation}
  where $N(x) \coloneqq
  (\log^2(1-x) - 2\log(1-x) + 2)\,(x-1) + 2$.
  If $p = p_m$ is an $o(1/m)$ sequence
  then
  \[
    \Var\left(T_{m,p}\right)
    = \Var\left(T_{m,0}\right)
    + 2\,p\,m^3 - p\,m^2
    + \frac{5}{4} \, p^2\, m^4
    + O\left(p^3\,m^5\right)
    + O\left(p^2 \, m^3\right).
  \]
  If $p \sim c/m$ then
    \[
      \Var\left(T_{m,p}\right)
      = \Var\left(T_{m,0}\right)
      + \left(1+o(1)\right) m^2
      \left[2\,c + c^2\int_0^1
      e^{-cx} \left(L(x) + N(x)\right) dx\right]
    \]
  where $L$ is defined in Theorem~\ref{thm:first moment}.
  If $p = \omega(1/m)$ then
  \[
    \Var\left( T_{m, p} \right) =
    \Var\left( T_{m, 0} \right) + \frac{\left(1-p\right)^{-2m}}{p^2}
    \left( 1 + O\left(\frac{1}{p\,m} \right)\right).
  \]
  In particular, for $p > 0$ fixed,
  \begin{align*}
    \Var\left(T_{m,p}\right)
    - \Var\left(T_{m,0} \right)
    &= \frac{(1-p)^{-2m}}{p^2} \Bigg\{
      1 + \left(-12\,p + 10 + \frac{2}{p} \right) m^{-1}\\
    &\phantom{{}= \frac{(1-p)^{-2m}}{p^2} \Bigg\{}
      + \left(126\,p^2 - 184\,p + 29 + \frac{24}{p} + \frac{5}{p^2} \right) m^{-2}\\
    &\phantom{{}= \frac{(1-p)^{-2m}}{p^2} \Bigg\{}
      + O\left(m^{-3}\right)\Bigg\}.
  \end{align*}
  \begin{proof}
    This proof is akin to the previous one.
    By redefining $\Lambda \coloneqq 1/I_{m,p}$
    we can use the reasoning of
    \eqref{eq:mgf second deriv} to extract the second moment,
    \begin{equation}
      \label{eq:second moment}
      \E \left[ \left(T_{m,p} - T_{m,0}\right)^2 \right]
      = M_{T_{m,p}-T_{m,0}}''(0) =
      \left(\frac{1}{\Lambda}\right)''\left(0^-\right)
      = 2 \E\left[ T_{m,p} - T_{m,0} \right]^2
      - \Lambda''\left(0^-\right),
    \end{equation}
    where we have used $\Lambda(0^-) = \Lambda(0) = 1$. By
    \eqref{eq:one deriv}, the unknown term $\Lambda''(0^-)$ is
    \begin{align*}
      -\Lambda''\left(0^-\right)
      &= -\frac{\partial^2}{\partial t^2} \left\{ (1-p)^{-m}
      \left[1 - \int_0^1 p\, m \left(1-x\right)^{m\exp(-t)-m}
      \left(1 - p\,x\right)^{m-1} dx\right] \right\}_{t=0^-}\\
      &= \frac{\partial}{\partial t} \left\{ (1-p)^{-m}
      \int_0^1 p\, m^2 \, e^{-t} \left(1-x\right)^{m\exp(-t)-m}
      \left(1 - p\,x\right)^{m-1} \log\left(\frac{1}{1-x}\right) dx \right\}_{t=0^-}.
    \end{align*}
    Then, with the fact
    \[
      \frac{\partial}{\partial t} \left\{
        e^{-t} \left(1-x\right)^{m\exp(-t)-m} \right\}_{t=0^-}
        = \log\left(\frac{1}{1-x}\right) m - 1,
    \]
    we find
    \begin{align*}
      - \Lambda''\left(0^-\right)
      &= (1-p)^{-m}
      \int_0^1 p\, m^2 \,
      \left[\log\left(\frac{1}{1-x}\right) m - 1\right]
      \left(1 - p\,x\right)^{m-1} \log\left(\frac{1}{1-x}\right) dx\\
      &=
      - \E\left[T_{m,p} - T_{m,0}\right]
      + 
      (1-p)^{-m}
      \int_0^1 p\, m^3 \,
      \left(1 - p\,x\right)^{m-1} \log^2\left(\frac{1}{1-x}\right) dx,
    \end{align*}
    where we have recognised \eqref{eq:bad first moment}.
    To handle the integral, use integration by parts
    with $N(x) = (\log^2(1-x) - 2\log(1-x) + 2)\,(x-1) + 2$,
    a chosen antiderivative of $\log^2(1-x)$; $N$
    increases from $N(0) = 0$ to $N(1) = 2$. Combine this with
    \eqref{eq:second moment} to create the concrete
    expression \eqref{eq:concrete second moment}.

    Let $\Xi$ be the third term of \eqref{eq:concrete second moment},
    \[
      \Xi \coloneqq \frac{(1-p)^{-m}}{p} \left\{ 2\,p^2\,m^3 \left(1-p\right)^{m-1}
      +
      \int_0^1 p^3\, m^3 \left(m-1\right)
      \left(1 - p\,x\right)^{m-2} N(x) \, dx
      \right\}.
    \]
    We will prove the following asymptotics of $\Xi$:
  \begin{align*}
    \Xi &= 2\,p\,m^3 + \frac{1}{4} \, p^2 \, m^4 +
      O\left(p^3\,m^5\right) + O\left(p^2\,m^3\right)
      && \text{when $p = o(1/m)$,}\\
    \Xi &\sim m^2 \left[
        c + c^2 \int_0^1 e^{-cx} \, N(x) \, dx\right]
        &&
        \text{when $p \sim c/m$,}\\
    \Xi &= \frac{2\left(1-p\right)^{-m}}{p^2}
    \left( 1 + O\left(\frac{1}{m} \right)\right)
    && \text{when $p = \omega(1/m)$.}
  \end{align*}
    
    Let $p$ be an $o(1/m)$ sequence. Use the bound
    $1 \geq (1-p\,x)^{m-2} \geq (1-p)^{m-2} = 1+O(p\,m)$,
    uniform in $x$, to say
    \begin{align*}
      \Xi &= (1-p)^{-m} \left\{ 2\,p\,m^3 \left(1-p\right)^{m-1}
      +
      \int_0^1 p^2\, m^3 \left(m-1\right)
      \left(1 - p\,x\right)^{m-2} N(x) \, dx
      \right\}\\
      &= \left(1 + O(p)\right) 2\,p\,m^3
      + (1+O(p\,m)) \left(1 + O\left(\frac{1}{m}\right)\right) \frac{1}{4} \, p^2 \, m^4\\
      &= 2\,p\,m^3 + O\left(p^2\,m^3\right)
      + \frac{1}{4} \, p^2 \, m^4 +
      O\left(p^3\,m^5\right) + O\left(p^2\,m^3\right)
      + O\left(p^3 \, m^4\right)
    \end{align*}
    where $\int_0^1 N(x) \, dx = 1/4$ has been used. Remove
    the redundant term to conclude this case.

    If $p \sim c/m$ for $c > 0$ then $p\,m \to c$ and $(1-p)^m \to e^{-c}$; from
    here apply dominated convergence.

    If $p = \omega(1/m)$ then $p\left(m-2\right) \to \infty$ and we can use Laplace's method.
    The expansion of $N$ is
    \[
      N(x) = \frac{x^3}{3} + O\left(x^4\right) \quad \text{as $x \downarrow 0$.}
    \]
    Thus, by recalling $h$ and $h^\bot$ defined in the previous
    proof, the second of which is $h^\bot(x) = x$,
    \begin{align*}
      p^3 \,m^3\, (m-1) \int_0^1 (1-p\,x)^{m-2} \, N(x) \, dx
      &= p^3 \, m^3\,(m-1)
        \int_0^1 N(x) \exp\left(-p\left(m-2\right) h(x)\right) dx\\
      &\leq p^3 \, m^3 \, (m-1)
      \int_0^1 N(x) \exp\left(-p\left(m-2\right)h^\bot(x)\right)\\
      &= p^3 \, m^3 \, (m-1)
      \left(
        \frac{2}{(p\,(m-2))^4} + O\left(\frac{1}{(p\,(m-2))^5}
        \right)
      \right)\\
      &= \frac{2}{p} + O\left(\frac{1}{p\,m}\right).
    \end{align*}
    The corresponding lower bound, which uses $h^\top$, is identical.
    The remaining term $2\,p^2 \,m^3 \,(1-p)^{m-1}$ is trivial.

    From here, see that \eqref{eq:concrete second moment} implies
    \[
        \Var\left(T_{m,p} - T_{m,0}\right) = 
        \E\left[T_{m,p} - T_{m,0}\right]^2
        - \E\left[T_{m,p} - T_{m,0}\right]
        + \Xi
    \]
    and use the asymptotics provided by Theorem~\ref{thm:first moment}.
    The variance admits the decomposition
    $\Var(T_{m,p}) = \Var(T_{m,0}) + \Var(T_{m,p} - T_{m,0})$
    by Lemma~\ref{thm:coupling}.
    For $p \sim c/m$, observe that $\E[T_{m,p} - T_{m,0}]
    = \Theta(m)$ is negligible.
    For the $p = \omega(1/m)$ and fixed $p$ cases,
    observe that $\E[(T_{m,p} - T_{m,0})^2]$ is the dominant term,
    the others being exponentially smaller.
  \end{proof}
\end{theorem}

To use the above theorem, recall from the literature or
Theorem~\ref{thm:variance} that
\[
  \Var\left(T_{m,0}\right)
  = m^2 \, H_{m}^{(2)} - m \, H_m
  \sim \frac{m^2 \, \pi^2}{6}
\]
where $H_{m}^{(2)}$ is the $m^\text{th}$ harmonic number of order $2$.

\section*{Acknowledgements}

We thank Stephen Muirhead for
many fruitful discussions, particularly concerning the critical case.
We also thank Christopher D.~Long for sharing with us a preliminary draft
of his article \cite{long2026clumsy}.
This research was supported by the Australian Research Council's Discovery Projects funding scheme (Project No.~DP230102209).


\end{document}